\newcounter{parag}[subsection]
\newcounter{paraga}[subsection]
\renewcommand{\theparaga}{{\bf\arabic{paraga}.}}
\newcommand{\paraga}{\medskip \addtocounter{paraga}{1} 
\noindent{\theparaga\ } }
\newcounter{pparag}
\newtheorem{theoreme}{Theorem}[section]
\newtheorem{lemme}[theoreme]{Lemma}
\newtheorem{prop}[theoreme]{Proposition}
\newtheorem{cor}[theoreme]{Corollary}
\newtheorem{definition}[theoreme]{Definition\rm}
\newtheorem{remarque}[theoreme]{\it Remark}
\def\Bbibitem#1#2{\bibitem[#1]{#2}}
\def\text#1{\,\hbox{#1}\;}
\def\text#1{\,\hbox{#1}\;}
\def\al{\alpha}
\def\ga{\gamma}
\def\Ga{{\Gamma}}
\def\de{\delta}
\def\De{\Delta}
\def\eps{{\varepsilon}}
\def\ka{\kappa}
\def\la{\lambda}
\def\om{\omega}
\def\Om{\Omega}
\def\sig{{\sigma}}
\def\Sig{{\Sigma}}
\def\th{{\theta}}
\def\ph{\varphi}
\def\jA{{\mathscr A}}
\def\jC{{\mathscr C}}
\def\jD{{\mathscr D}}
\def\jN{{\mathscr N}}
\def\jO{{\mathscr O}}
\def\jT{{\mathscr T}}
\def\jU{{\mathscr U}}
\def\jV{{\mathscr V}}
\def\jg_\eps{{\mathscr g_\eps}}
\def\bC{{\bf C}}
\def\A{{\mathbb A}}
\def\R{{\mathbb R}}
\def\S{{\mathbb S}}
\def\T{{\mathbb T}}
\def\Z{{\mathbb Z}}
\def\I{{\mathbb I}}
\def\Max{\mathop{\rm Max\,}\limits}
\def\Min{\mathop{\rm Min\,}\limits}
\def\setm{\setminus}
\def\ov{\overline}
\def\til{\widetilde}
\def\ha{\widehat}
\def\d{\partial}
\def\inv{^{-1}}
\def\pdemi{{\tfrac{1}{2}}}
\def\abs#1{\left\vert#1\right\vert}
\def\norm#1{\Vert#1\Vert}
\def\setm{\setminus}
\def\sA{{\mathsf A}}
\def\e{{\mathbf e}}
\def\bu{{\bullet}}
\def\HH{{\bf H}}
\def\bA{{\bf A}}
\def\cA{{\mathcal A}}
\def\dd{{\bf d}}
\def\H{{\mathcal H}}
\def\beq{\begin{equation}}
\def\eeq{\end{equation}}
\def\cyl{{\rm Cyl}}
\def\chain{{\rm Chain}}
\def\tori{{\rm Tori}}
\author[1]{Jean-Pierre Marco}
\author[2]{Lara Sabbagh \thanks{This work was partially supported by the EPSRC grant EP/J003948/1.}}
\title{\LARGE{\textbf{Examples of nearly integrable systems on $\A^3$ with asymptotically dense projected orbits}}}
\affil[1]{IMJ, University Paris VI, email: marco@math.jussieu.fr}
\affil[2]{Mathematics Institute, University of Warwick, email: l.el-sabbagh@warwick.ac.uk}
\begin{document}
\maketitle

%
%
%
%
%

\vskip10mm

\begin{abstract} Given an integer $\ka\geq2$, we introduce a class of nearly integrable 
systems on $\A^3$, of the form
$$
H_n(\th,r)=\pdemi \norm{r}^2+\tfrac{1}{n} U(\th_2,\th_3)+f_n(\th,r)
$$
where $U\in C^\ka(\T^2)$ is a generic potential function and $f_n$ a $C^{\ka-1}$ additional
perturbation such that $\norm{f_n}_{C^{\ka-1}(\A^3)}\leq \tfrac{1}{n}$, so that $H_n$ is a
perturbation of the completely integrable system $h(r)=\pdemi\norm{r}^2$. 
 
Let $\Pi:\A^3\to\R^3$ be the canonical projection.  We prove that for each $\de>0$, there exists 
$n_0$ such that for $n\geq n_0$, the system $H_n$ admits an orbit $\Ga_n$ at energy $\pdemi$
whose projection $\Pi(\Ga_n)$ is $\de$--dense  in $\Pi(H_n\inv(\pdemi))$, in the sense that the 
$\de$--neighborhood of $\Pi(\Ga_n)$ in $\R^3$ covers $\Pi(H_n\inv(\pdemi))$.
\end{abstract}

\vskip25mm


\section{Introduction and main result}
The aim of this paper is to construct a simple class of {\em a priori} stable nearly integrable systems on $\A^3$ 
for which the dynamical behavior caused by a double resonance plays the central role and yields the existence of 
``asymptotically dense projected orbits", that is, orbits at fixed energy whose projection on the energy level
passes within an arbitrarily small distance from each point of the projected energy level, when the size of
the perturbation tends to $0$.

\vskip1mm

Several more general results on Arnold diffusion were recently announced. The goal of this paper is more
modest, we try to underline the geometry of the diffusion process and to get rid of all (heavy) technical details
as far as possible. We however think that our present work can help understand the more sophisticated
methods coming into play for the proof of the so-called Arnold conjecture. We refer to the forthcoming
publications by C.-Q. Cheng, V. Kaloshin and Ke Zhang, and J.-P. Marco for the complete studies
(see \cite{C12,Mar1,Mar2,KZ12}) .

\vskip1mm

Given an integer $m\geq 1$, we denote by $\A^m=\T^m\times\R^m$ the cotangent bundle of the torus $\T^m$ 
that we endow with its usual angle-action coordinates $(\th,r)$ and its Liouville symplectic form  
$\Om=\sum_{i=1}^mdr_i\wedge d\th_i$.   We denote by $\Pi$ the projection $\A^m\to\R^m$. 
When $H$ is a $C^\ka$ function on an open set of $\A^m$, $\ka\geq 2$, 
we denote by $X^H$ its Hamiltonian vector field and by $\Phi_t^{H}$ its local flow.
Given a function $H$ and an element $a$ in its
range, we write $H\inv(a)$ instead of $H\inv(\{a\})$, even if $H$ is not a bijection.

\vskip1mm

Our systems will be defined on $\A^3$ and have the following form
$$
H_n(\th,r)=\pdemi \norm{r}^2+\tfrac{1}{n} U(\th_2,\th_3)+f_n(\th,r),
$$
where $\norm{\cdot}$ stands for the Euclidean norm, 
$U\in C^\ka(\T^2)$ is a generic potential function and $f_n\in C^{\ka-1}(\A^3)$ is an additional perturbation 
such that $\norm{f_n}_{C^{\ka-1}(\A^3)}\leq \tfrac{1}{n}$.  For the sake of simplicity, we limit ourselves to the case 
where $\ka$ is an integer $\geq 2$ but the construction could easily be extended to the $C^{\infty}$ or Gevrey cases
as well. 

\vskip1mm

The system $H_n$ is a perturbation of the integrable system $h(r)=\pdemi\norm{r}^2$.
We will focus on the energy level $H_n\inv(\pdemi)$ but  any other positive energy level would have the same 
properties. The frequency map associated with
$h$ is 
$$
\om(r)=r,
$$
and the double resonance under concern is the set of actions $r$ such that $\om_2(r)=\om_3(r)=0$, that is,
the line $r_2=r_3=0$. This line intersects the unperturbed level $\S=h\inv(\pdemi)$ (the unit sphere)
at the points $D_\pm=(\pm 1,0,0)$. Both averaged systems at these points have the same ``principal part'', namely:
$$
\ov H_n(\th_2,\th_3,r_2,r_3)=\pdemi\big(r_2^2+r_3^2)+\tfrac{1}{n} U(\th_2,\th_3).
$$
The full averaged systems also contain the average of $f_n$, but this will be insignificant thanks to a proper choice
of this additional perturbation.
The system $\ov H_n$ is of ``classical form'', the sum of a kinetic part and a potential part.  The potential $U$
is the main data of the problem, it will be arbitrarily chosen in a residual subset of $C^\ka(\T^2)$.
In particular, the system $\ov H_n$ will be 
nonintegrable. This property is in contrast with the previous studies on double resonances where the averaged system
was usually assumed to be integrable or nearly integrable (see \cite{Bes97}). However, this nonintegrability and
the associated ``chaotic behavior'' are essential features of generic  nearly integrable systems as proved in the recent 
studies on Arnold diffusion. 
On the contrary,  the last term $f_n$ of the perturbation will be a ``very  nongeneric''  bump function, 
especially designed to  easily create and control the  so-called ``splitting of separatrices'' in the spirit of 
\cite{D88,MS02}.

\vskip1mm

The truncated system
\beq\label{eq:truncham}
\H_n(\th,r)=\pdemi\norm{r}^2+\tfrac{1}{n} U(\th_2,\th_3),\qquad (\th,r)\in\A^3,
\eeq
does not admit diffusion orbits. In fact, it appears as the direct product of the one-degree-of-freedom Hamiltonian
$
\pdemi r_1^2
$
with the previous system $\ov H_n$, and the conservation of energy in both factors prevents from any
diffusion phenomenon. It is only when the perturbation $f_n$ is added that the splitting of separatrices appears
and makes the diffusion possible. The structure of our system is therefore in some sense analogous to that of Arnold's 
initial model for diffusion  along a simple resonance (\cite{A64}). But while in Arnold's model the diffusion phenomenon 
occurs only along  a single resonance, in our  model the diffusion takes place along a very large family of simple 
resonances,  namely the great circles of $\S$ orthogonal to the vectors $k=(0,k_2,k_3)$, where 
$k_2,k_3$ are  coprime integers. The previous  double resonant points $D_\pm$ are the places where exchanges 
of resonances are  made possible by the structure of the averaged systems in their neighborhood. 

\vskip1mm

Let us now state our main result.
For $2\leq \ka< +\infty$, we endow the spaces $C^\ka(\T^2)$ of $C^\ka$ functions on $\T^2$ with their usual 
$C^\ka$ norms
$$
\norm{U}_{C^\ka(\T^2)}=\Max_{\abs{\al}\leq \ka} \Max_{\th\in\T^2}\abs{\partial ^\al U(\th)},
$$
which make them Banach spaces.  Throughout this paper, the triples $x=(x_1,x_2,x_3)$ in $\T^3$ or $\R^3$ 
will also be denoted by 
$$
x=(x_1,\ov x), \qquad \ov x=(x_2,x_3).
$$
We also introduce a formal definition for the notion of ``approximative density".
Given a metric space $(E,d)$ and $\de>0$, we say that a subset
$S$ of $E$ is $\de$-dense in a subset $F\subset E$ when $F$ is contained in the union of the family of all open 
$\de$-balls centered at points of $S$. We will prove the following diffusion result.

\begin{theoreme}\label{thm:main} 
Let $\ka\geq 2$ be a fixed integer. Then
there exists a residual subset $\jU$ in $C^\ka(\T^2)$ such that 
for each $U\in \jU$, there exists a sequence $(f_n)_{n\geq 1}$ of $C^{\ka-1}$ functions on $\A^3$,
with $\norm{f_n}_ {C^{\ka-1}(\A^3)}\leq\tfrac{1}{n}$,
such that for any $\de>0$, there exists $n_0$ such that for $n\geq n_0$, the system
\begin{equation}
H_n(\th,r)=\pdemi \norm{r}^2 +\tfrac{1}{n} U(\ov \th)+f_n(\th,r),\qquad (\th,r)\in\A^3,
\end{equation}
admits an orbit $\Ga_n$ with energy $\pdemi$ such that $\Pi(\Ga_n)$ is $\de$--dense in $\Pi(H_n\inv(\pdemi))$.
\end{theoreme}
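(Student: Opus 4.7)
The plan is to exploit the product structure of the truncated Hamiltonian $\H_n$ together with an explicit Melnikov splitting induced by $f_n$, constructing a shadowing orbit that drifts along a $\de/2$-dense web of simple resonances and uses the double resonance points $D_\pm=(\pm 1,0,0)$ as switching stations. First, given $\de>0$, I would fix a finite family of primitive integer vectors $k^{(1)},\ldots,k^{(N)}\in\{0\}\times(\Z^2\setminus\{0\})$ such that the great circles $\jC_j=\S\cap(k^{(j)})^\perp$ form a $\de/2$-net in the unit sphere $\S\subset\R^3$. For $n$ large the projected energy level $\Pi(H_n\inv(\pdemi))$ is an $O(1/n)$ spherical shell around $\S$, so it suffices to produce an orbit $\Ga_n$ whose $r$-projection comes within $\de/2$ of every point of $\bigcup_j\jC_j$. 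Each $\jC_j$ passes through both $D_\pm$, which is essential: these are the only places where exchanges between cylinders attached to different great circles are possible.

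Along each simple resonance $\jC_j$ away from $D_\pm$, standard partial averaging produces a normally hyperbolic invariant cylinder $\jK_j$ with nearly integrable pendulum-like inner dynamics. The role of $f_n$ is to create a controlled splitting of the separatrices of $\jK_j$: following \cite{D88,MS02}, one chooses $f_n$ as a carefully placed bump function whose first-order Melnikov integral is nondegenerate, so that the stable and unstable manifolds of $\jK_j$ meet transversally along a family of homoclinic orbits; classical shadowing then furnishes orbits that drift along $\jK_j$ and cover all of $\jC_j$ except small neighborhoods of $D_\pm$. Near each $D_\pm$, the full Hamiltonian reduces at first order to the product of $\pdemi(r_1\mp 1)^2$ by the mechanical system $\ov H_n=\pdemi(r_2^2+r_3^2)+\tfrac{1}{n}U(\ov\th)$ on $T^*\T^2$. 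The residual set $\jU\subset C^\ka(\T^2)$ would be defined by the countable family of open-dense transversality conditions ensuring that, at the relevant energy, $\ov H_n$ admits transverse heteroclinic connections between every pair of cylinders $\jK_i,\jK_j$ meeting at $D_\pm$; such conditions are open and dense in $C^\ka(\T^2)$ by a Kupka--Smale style argument, so $\jU$ is residual.

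With the simple-resonance drift and the double-resonance heteroclinics in hand, I would then glue them: a single orbit $\Ga_n$ of $H_n$ shadows successively the drift along $\jC_1,\ldots,\jC_N$, with transitions at $D_\pm$ between consecutive segments. By construction its $r$-projection is $\de/2$-close to every point of each $\jC_j$, hence $\de$-dense in the spherical shell $\Pi(H_n\inv(\pdemi))$. The hardest step is this last gluing and the double-resonance analysis on which it relies. The usual Arnold normal-form machinery does not apply because $\ov H_n$ is genuinely chaotic, so the existence and transversality of the required heteroclinic connections between arbitrarily many cylinders must be established directly on the mechanical system. Additionally, the shadowing argument must be made uniform in $N=N(\de)$ and must cope with the loss of one derivative in $f_n$, although on each simple-resonance segment the latter is manageable thanks to the explicit bump construction and the $1/n$ smallness of $f_n$.
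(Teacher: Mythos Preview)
Your architecture matches the paper's: cylinders along simple-resonance great circles, a bump perturbation $f_n$ creating transverse splitting, and shadowing along the resulting chain. Two technical points differ and are worth noting. First, no averaging is used: after the rescaling $r\mapsto\sqrt n\,r$, the truncated Hamiltonian $\H_n$ is an \emph{exact} product of $\pdemi r_1^2$ with the classical system $C_U(\ov\th,\ov r)=\pdemi\norm{\ov r}^2+U(\ov\th)$, so the cylinders arise directly as circle bundles over annuli of $C_U$ (families of hyperbolic periodic orbits realizing the class $c\sim(-k_3,k_2)$) and are exactly foliated by invariant $2$--tori whose minimality is checked by an elementary torsion computation. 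Second, the switch at $D_\pm$ is not done via pairwise heteroclinic connections among the $\jK_j$: the residual set $\jU$ (Theorem~\ref{thm:genprop}, quoted from \cite{Mar1}) is defined so that $C_U$ possesses a single \emph{singular annulus} $\sA_\bu$ with transverse heteroclinic connections to the low-energy end of \emph{every} chain of annuli; this lifts to singular cylinders $\jC_\bu^\pm$ near $D_\pm$ serving as universal hubs, and each resonance chain begins and ends there. The splitting itself is produced by an explicit flow-box computation (equations~(\ref{eq:vectorbox})--(\ref{eq:difen})) rather than a Melnikov integral, and the $\la$--lemma of \cite{S13} provides the shadowing. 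Your version is plausible, but the paper's exact product structure and hub-and-spoke switching sidestep the averaging remainders and the combinatorics of pairwise connections that you flag as the hardest step; your worry about uniformity in $N(\de)$ also dissolves, since the paper simply takes the chain for $\H_n$ to contain the first $n$ resonances and then chooses $n_0(\de)$ large.
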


Since we only aim at producing examples, the fact that $\jU$ is nonempty would be enough. The fact that
the set of ``convenient'' potentials is residual was proved in \cite{Mar1} but of course a single potential
would be enough to construct an example. Since examples are not too difficult to produce, the content
of this paper can be seen as independent from \cite{Mar1}.
However, taking for granted the residual character of $\jU$ makes it plausible --eventhough we do not try to prove this--
that the wild behavior of orbits described in our examples is in fact ``typical'' for a priori stable perturbations
of integrable systems. As mentionend above, this last question has been recently investigated by several authors.

\vskip1mm

We could also work in the class of diffeomorphisms, in which case an analogous 
construction yields examples of nearly-integrable diffeomorphisms with a large class of orbits biasymptotic to 
infinity.  However we will limit ourselves here to the Hamiltonian case, which is indeed richer and sligthly more difficult
due to the additional geometrical difficulty induced by the preservation of energy. 
The paper is organized as follows.

\vskip1mm

$\bullet$ Section 2 is devoted to the description of those properties of  classical systems which will be needed to
construct our examples, namely the existence of suitable {\em chains of annuli}. Here we summarize \cite{Mar1}. 
Again, we emphasize that our present construction is to a large extent independent of this latter work 
(apart from the necessary definitions), the concern of which is the  genericity of the potential $U$ for which
the associated classical system possesses suitable chains of annuli.

\vskip1mm

$\bullet$ In Section 3, we deduce from the previous properties of classical systems the existence of {\em chains
of cylinders} in our systems $\H_n$, and we prove that these chains project in the space of actions asymptotically
close to a dense family of great circles in the unit sphere (the simple resonance lines). These cylinders are normally 
hyperbolic invariant manifolds diffeomorphic to $\T^2\times\,[0,1]$ and admit a foliation by invariant tori diffeomorphic 
to $\T^2$.

\vskip1mm

$\bullet$ In Section 4, we construct the sequence $(f_n)$ in such a way that each invariant torus in the previous
family admits a homoclinic orbit along which its stable and unstable manifolds intersect transversely in a weak
sense. This in particular yields the existence of heteroclinic connections between nearby enough tori contained in the
same cylinders. Other  transversality properties for heteroclinic orbits between tori belonging to distinct cylinders of the chains 
are also proved.

\vskip1mm

$\bullet$ Finally in Section 5, we prove the existence of the diffusion orbits. The key result there is 
the $\la$--lemma proved in \cite{S13} which is specially designed for normally hyperbolic manifolds and which 
enables us to prove very easily the necessary shadowing results.


\section{Classical systems}

This section is devoted to the description of the generic hyperbolic properties of classical systems on the torus
$\T^2$ which will be needed in the construction of our examples. Given a potential function $U\in C^\ka(\T^2)$,
we define here the associated classical system as the Hamiltonian on $\A^2$
\begin{equation}\label{eq:classham}
 C_U(x,y)=\pdemi \norm{y}^2+ U(x),
\end{equation}
where $x\in\T^2$ and $y\in\R^2$.
We will always require the potential $U$ to admit a single maximum $\ha e$ at some $x^0$, which is  nondegenerate
in the sense that the Hessian of $U$ is negative definite.  This is of course true for a $U$ in a residual subset
$\jU_0\subset C^\ka(\T^2)$. It is then easy to check that the lift of $x^0$ to the zero section of $\A^2$ is a 
hyperbolic fixed point for $X^{C_U}$.


\paraga We denote by $\pi:\A^2\to\T^2$  the canonical projection and we fix $U\in\jU_0$ together with the 
associated classical system $C:=C_U$.

\begin{definition}\label{def:annuli}
Let $c\in H_1(\T^2,\Z)$ and let $I\subset\R$ be an interval. 
An {\em annulus for $X^C$  realizing $c$ and defined over $I$} is a submanifold $\sA$, contained 
in $C\inv(I)\subset\A^2$, such that
\begin{itemize}
\item for each $e\in I$,  $\sA\cap C\inv(e)$ is the orbit of a periodic solution $\ga_e$ of $X^C$, 
which is hyperbolic in $C\inv(e)$, with orientable stable and unstable bundles, and such that the projection 
$\pi\circ\ga_e$ on $\T^2$ realizes $c$;
\item when $e>\ov e$, the frequency $\om(e)$ of the solution $\ga_e$ is an increasing function of $e$
and when $e<\ov e$, $\om(e)$ is a decreasing function of $e$;
\item there exists a covering $I=\cup_{1\leq i\leq i^*} I_i^*$ of $I$ by open subintervals of $I$ such that 
for $1\leq i\leq i^*$ and for $e\in I^*_i$, the solution $\ga_e$ admits a homoclinic solution $\om^i_e$ 
along which the stable and unstable manifolds of $\ga_e$  intersect transversely inside $C\inv(e)$.
\item one can choose the covering $I=\cup_{1\leq i\leq i^*} I_i^*$ in such a way that $I_i$ and $I_j$
are disjoint if $\abs{j-i}\geq 2$ and the solutions $\om^i_e$ and $\om^{i+1}_e$ are geometrically
disjoint for $e$ in the intersection $I^*_i\cap I^*_{i+1}$.
\end{itemize}
\end{definition}

Since the solutions $\ga_e$ are hyperbolic and vary continuously with $e$ (since $\sA$ is assumed to 
be a submanifold),  the annulus  $\sA$ is a $C^{\ka-1}$ submanifold of $\A^2$, with boundary 
$\d\sA\sim \T\times \d I$.  It is clearly normally hyperbolic (the boundary causes no trouble is this 
simple setting, due to the conservation of the Hamiltonian), and its stable and unstable manifolds are the 
unions of those of the periodic solutions  $\ga_e$.  Note that when $I$ has a boundary point, the family 
$\ga_e$ can be continued  over a slightly larger open interval, but it will be interesting to allow the intervals
to be compact in our subsequent constructions. 

\vskip2mm

It is not difficult to prove that there exists an
embedding $\phi:\T\times I\to \A^2$  whose image is $\sA$ and which satisfies  
\begin{equation}\label{eq:embedd}
C\circ\phi(\ph,e)=e.
\end{equation}
Note that obviously
$\phi(\T\times\{e\})=\sA\cap C\inv(e)$.
Moreover, one can find a symplectic embedding $\phi_s$ such that $C\circ\phi_s$ is in action-angle form
(that is, does not depend on the angle),
where of course $\T\times I$ is equipped with its usual symplectic structure. 


\paraga Due to the reversibility of $C$, the solutions of the vector field $X^C$ occur in {\em opposite pairs} 
(pairs of symmetric solutions whose time parametrizations are exchanged by the symmetry $t\mapsto-t$). 
This is in particular the case for the solutions homoclinic to the hyperbolic fixed point $O$ associated with 
the maximum $x^0$ of $U$. We set 
$$
\ha e=\Max U=U(x^0).
$$

\begin{definition}\label{def:singannuli} Let $c\in H_1(\T^2,\Z)\setm\{0\}$.
A {\em singular annulus for $X^C$  realizing $\pm c$, with parameters $\til e>\ha e$ and 
$e^0<\ha e$}, is a $C^1$ invariant manifold $\sA_\bu$ with boundary , diffeomorphic to the sphere 
$S^2$ minus three disjoint open discs,  such that, setting $I=\,]\ha e,\til e]$ and $I_0=[e^0,\ha e[$:
\begin{itemize}
\item $\sA_\bu\cap\, C\inv(\ha e)$  is the union of the hyperbolic  fixed point  $O$ and a pair of opposite 
homoclinic orbits,
\item  $\sA_\bu\cap C\inv(I)$ admits two connected components $\sA_\bu^>$ and $\sA_\bu^<$, 
which are annuli  defined over $I$ and  realizing $c$ and $-c$ respectively,
\item  $\sA_\bu^0=\sA_\bu\cap C\inv(I_0)$ is an annulus defined over $I_0$ and realizing the null class $0$,
\item $\sA_\bu$ admits a $C^1$ stable (resp. unstable) manifold, in which the union of the stable (resp. unstable) 
manifolds of $\sA_\bu^>$, $\sA_\bu^<$ and $\sA_\bu^0$ is dense.
\item both homoclinic orbits admit  homoclinic connections along which the stable and unstable manifolds
of $\sA_\bu$ intersect transversely in $C\inv(\ha e)$.
\end{itemize}
\end{definition}

Note that a singular annulus $\sA_\bu$ is ``almost everywhere $C^{\ka-1}$'', since the  connected 
components of 
$\sA_\bu\cap C\inv(I)$ and $\sA_\bu\cap C\inv(I_0)$ are annuli, so $C^{\ka-1}$ submanifolds of $\A^2$. 
Note also that $\sA_\bu$ is a center manifold for both homoclinic orbits in $\sA_\bu\cap C\inv(\ha e)$, with 
hyperbolic  transverse spectrum. One can in fact prove that a singular annulus is slightly more regular 
than $C^1$  (depending on the Lyapunov exponents of the fixed point $O$), but this is useless here. 

\vskip2mm 

A singular annulus is depicted in Figure \ref{Fig:singcyl}: it is essentially 
the part of the phase space of  a simple pendulum limited by two essential invariant curves at the same energy,
 from which a neighborhood of the elliptic fixed point was 
removed. More precisely, on the annulus $\A$ equipped with the coordinates $(\ph,I)$, we define a
``pendulum Hamiltonian'' as a Hamiltonian of the form
$$
P_{\ha e}(\ph,I)=\pdemi I^2 +V(\ph)+\ha e
$$ 
where $V$ is a $C^2$ potential function with a single nondegenerate maximum at $0$ and a single nondegenerate
minimum, which satisfies $V(0)=0$. 
For $a<\ha e<b$ we introduce the subset $\cA_\bu(a,b)$ 
defined by  $a \leq P_{\ha e}(\ph,I) \leq b$.  So $\cA_\bu(a,b)$ is the zone bounded by the two 
invariant curves of equation $P_{\ha e}=b$, together with an invariant curve surrounding  the elliptic point. We call 
$\cA_\bu(\ha e,V,a,b)$  the {\em standard singular annulus} with parameters $(\ha e,V,a,b)$. One  
proves  that a singular  annulus is $C^1$  diffeomorphic  to some standard annulus by a 
diffeomorphism $\phi_\bu:\cA_\bu(a,b)\to\sA_\bu$ such that 
\begin{equation}\label{eq:fibu}
C_{\vert\sA_\bu}\circ \phi_\bu=P_{\ha e}.
\end{equation}

\begin{remarque}\label{rem:emb}
By defintion of an annulus, there exist embeddings $\phi_\bu^>:\T\times \,]\ha e,\til e\,]\to \sA_\bu^>$,
$\phi_\bu^<:\T\times \,]\ha e,\til e\,]\to \sA_\bu^<$
and $\phi^0_\bu:\T\times [e^0,\ha e[\,\to \sA_\bu^0$  for the 3 subannuli of a singular annulus.
These embeddings satisfy
\beq\label{eq:embedsing}
C\circ\phi_\bu^*(\ph,e)=e,
\eeq
where $*$ stands for $>,<$ or $0$.
\end{remarque}

\vskip-4mm 
\begin{figure}[h]
\begin{center}
\begin{pspicture}(0cm,2.5cm)
\psset{xunit=.5cm,yunit=.4cm}
\rput(-4,0){
\psellipse(8,2.5)(.5,1.5)
\psellipse(0,2.5)(.5,1.5)
\psframe[linestyle=none,fillstyle=solid,fillcolor=white](0,0)(.8,4)
\pscurve(3.5,3)(3.8,2.2)(4.5,3)
\pscurve(3.5,3)(3.8,2.55)(4.05,2.3)
\pscurve(0,4)(2,3.2)(3.5,3)
\pscurve(4.5,3)(6,3.2)(8,4)
\pscurve(4.5,3)(6,3.2)(8,4)
\pscurve(0,1)(2,.2)(4,-.1)(6,.2)(8,1)
\pscircle[fillstyle=solid,fillcolor=black,linecolor=black](4,-.1){.05}
\rput(4,-.7){$O$}
\pscurve[linecolor=black](4,-.1)(3.5,.8)(3.2,1.6)(3.1,2.5)(3.2,3)
\pscurve[linecolor=black](4,-.1)(4.5,.8)(4.8,1.6)(4.9,2.5)(4.8,3)
\rput(8,5){$\sA_\bu^>$}
\psline[linewidth=.1mm](8,4.6)(7,3)
\rput(0,5){$\sA_\bu^<$}
\psline[linewidth=.1mm](0,4.4)(1,3)
\rput(4,5){$\sA_\bu^0$}
\psline[linewidth=.1mm](4,4.3)(4,1.5)
\rput(1.5,-2){\color{black}$C\inv(\ha e)\cap\sA_\bu$}
\psline[linewidth=.1mm](2,-1.3)(3.5,.9)
\psline[linewidth=.1mm](2,-1.3)(4.5,.9)
}
\end{pspicture}

\vskip8mm
\caption{A singular annulus}\label{Fig:singcyl}
\end{center}
\end{figure}
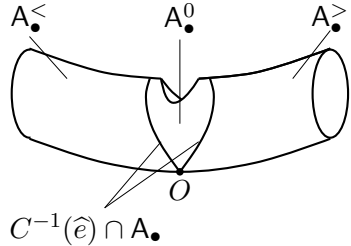

\vspace{-3mm}


\paraga Let us now turn to the definition of chains of annuli for the classical system $C$.
We say  that a family $(I_i)_{1\leq i\leq m}$ of  nontrivial closed subintervals of $]\ha e,+\infty[$ is 
{\em ordered} when  $\Max I_i>\Min I_{i+1}$ for $1\leq i\leq m-1$. Of course in the following we will
assume that $\Max I_i$ is only slightly larger that $\Min I_{i+1}$, so that the overlapping between
two consecutive intervals is only a small neighborhood of their extremities.

\begin{definition}\label{def:chains} Let $c,c'\in H_1(\T^2,\Z)\setm\{0\}$.
\begin{itemize}
\item A {\em chain of annuli realizing $c$} is a  family $(\sA_i)_{0\leq i\leq m}$ of 
annuli realizing $c$, defined over an ordered family $(I_i)_{0\leq i\leq m}$ of closed subintervals of 
$]\ha e,+\infty[$, with the additional property that for $0\leq i\leq m-1$, if $\Ga_i(e)$ stands for the hyperbolic
orbit at energy $e$ on $\sA_i$, if $e\in[\Min I_{i+1},\Max I_i]$:
$$
W^u\big(\Ga_i(e)\big)\cap W^s\big(\Ga_{i+1}(e)\big)\neq \emptyset,\qquad 
W^s\big(\Ga_i(e)\big)\cap W^u\big(\Ga_{i+1}(e)\big)\neq \emptyset,
$$
the intersection being transverse in the corresponding energy level $C\inv(e)$.
\item A {\em generalized chain of annuli realizing $c$ and $c'$}
is the union of two chains $(\sA_i)_{0\leq i\leq m}$ and $(\sA'_i)_{0\leq i\leq m'}$  realizing $c$ and $c'$ 
respectively, together with a singular annulus $\sA_\bu$, such that 
$$
W^u(\sA_{0})\cap W^s(\sA_\bu)\neq \emptyset,\qquad 
W^s(\sA_{0})\cap W^u(\sA^{\bu})\neq \emptyset,
$$
$$
W^u(\sA'_{0})\cap W^s(\sA_\bu)\neq \emptyset,\qquad 
W^s(\sA'_{0})\cap W^u(\sA^{\bu})\neq \emptyset,
$$
the intersections being transverse in $\A^2$.
\end{itemize}
\end{definition}

Note that we do not specify the homology of the singular annulus $\sA_\bu$, this latter turns out to be 
fixed independently of the classes $c$ and $c'$ in our subsequent construction.


\paraga We  now state the genericity result from \cite{Mar1}. 
We say that $c\in H_1(\T^2,\Z)\setm\{0\}$ is primitive when the 
equality $c=mc'$ with $m\in\Z$ implies $m=\pm1$.  We denote by $\HH_1(\T^2,\Z)$ the set of primitive 
homology classes. Let $\dd$ be the Hausdorff distance for compact subsets of $\R^2$ and  
$\Pi:\A^2\to\R^2$ the canonical projection. Recall that $\jU_0$ is the set of potentials with a single nondegenerate
maximum.

\vskip3mm

\begin{theoreme} {\bf (\cite{Mar1})}. \label{thm:genprop}
Let $2\leq \ka\leq +\infty$.
Then there exists a  residual subset $\jU\subset\jU_0$ in $C^\ka(\T^2)$ such that for $U\in \jU$,  the 
associated classical  system $C_U$  defined in (\ref{eq:classham}) satisfies the following properties.
\begin{enumerate}
\item For each  $c\in \HH_1(\T^2,\Z)$ there exists a chain $\bA(c)=(\sA_0,\ldots,\sA_m)$ of
annuli realizing~$c$, defined over ordered intervals $I_0,\ldots,I_m$, such that the first and last intervals 
are of the form
$$
I_0=\,]\Max U,e_m]\quad \textit{and}\quad I_m=[e_P,+\infty[,
$$
for suitable constants $e_m$ and $e_P$.

\item Let $c=(c_1,c_2)$ be the canonical identification of $H_1(\T^2,\Z)$ with $\Z^2$ and for $e>0$,
set 
$$
Y_c(e)=\frac{\sqrt {2e}\,c}{\norm{c}}
$$
Setting $\Ga_e=\sA_m\cap C_U\inv(e)$ for $e\in [e_P,+\infty[$, then
$$
\lim_{e\to+\infty}\dd\big(\Pi(\Ga_e),\{Y_c(e)\}\big)=0
$$


\item There exists a singular annulus $\sA_\bu$ which admits transverse heteroclinic connections with every 
first  annulus in the previous chains.
\end{enumerate}
\end{theoreme}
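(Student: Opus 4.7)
The plan is to combine a high-energy perturbative analysis via the Maupertuis principle with an Aubry--Mather variational construction at intermediate and low energies, closed up by a Kupka--Smale--type genericity argument producing the residual subset $\jU$. For $e>\Max U$, orbits of $X^{C_U}$ at energy $e$ coincide, up to time reparametrization, with geodesics of the Jacobi metric $g_e=2(e-U)\,g_{\mathrm{flat}}$ on $\T^2$. As $e\to+\infty$ the rescaled metric $g_e/(2e)$ converges to the flat metric, whose closed geodesics in a primitive class $c$ are straight lines of direction $c/\norm{c}$. Standard minimal-geodesic theory then gives, for each $c\in\HH_1(\T^2,\Z)$, a $g_e$-minimizing closed geodesic in the class $c$ which, for $U$ in an open dense set, is unique and nondegenerate, hence hyperbolic in its energy surface for every $e\geq e_P$ large enough. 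This produces the last annulus $\sA_m$ over $[e_P,+\infty[$, and a direct length estimate yields $\dd\big(\Pi(\Ga_e),\{Y_c(e)\}\big)\to 0$ as $e\to+\infty$, giving item~(2).

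For item (1) I would work downward from $e_P$ towards $\Max U$ by tracking the $g_e$-minimizing closed geodesics in class $c$ as $e$ decreases. This family is generically piecewise smooth in $e$ with finitely many nondegenerate transitions between branches (birth/death, collision or exchange of minimizers). Each branch provides a hyperbolic annulus realizing $c$, and at each transition energy a minimizing orbit with free endpoints produces a heteroclinic connection between the two competing minimizing periodic orbits, by the standard Mather variational mechanism. A further generic perturbation makes these connections transverse inside their energy level, yielding the heteroclinic property of Definition~\ref{def:chains}. For $e$ slightly above $\Max U$, the minimizer shadows a concatenation of homoclinic loops at the fixed point $O$ realizing $c$, coming from the Ma\~n\'e critical-value construction; this produces the first annulus $\sA_0$ over an interval of the form $\,]\Max U,e_m]$.

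The singular annulus $\sA_\bu$ in item (3) is built from one fixed pair of opposite transverse homoclinics of $O$ at energy $\Max U$, realizing an auxiliary primitive class, together with the three families of periodic orbits $\sA_\bu^>$, $\sA_\bu^<$, $\sA_\bu^0$ prescribed by Definition~\ref{def:singannuli}, which exist in its center manifold by a pendulum-like local reduction around the homoclinic loops. Its regularity follows from normally hyperbolic persistence, and the transverse splitting of the pair of homoclinics is ensured by an open and dense Melnikov-type condition on $U$. The required transverse heteroclinic connections between $\sA_\bu$ and the first annulus $\sA_0$ of each chain are then obtained from transverse homoclinic orbits of $O$ realizing $c$, which persist as transverse intersections of $W^{u,s}(\sA_\bu)$ and $W^{s,u}(\sA_0)$ in $\A^2$; transversality is enforced by a countable Kupka--Smale--type argument applied uniformly over all primitive classes $c\in\HH_1(\T^2,\Z)$.

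The main obstacle is the construction of the chain in the intermediate energy range and its compatibility with the singular annulus. The ``vertical'' structure of hyperbolic minimizing orbits as a function of energy can be intricate, and one must show that for generic $U$ only tame, codimension-one bifurcations occur and that each of them produces the required transverse heteroclinic in an energy-preserving way, uniformly in the class $c$. Since $\HH_1(\T^2,\Z)$ is countable, the residual subset $\jU$ is then obtained as the intersection of the countably many open and dense conditions corresponding to the existence of the chain $\bA(c)$ and of the transverse connections to $\sA_\bu$, for all primitive classes~$c$.
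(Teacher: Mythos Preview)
This theorem is not proved in the present paper: it is quoted from \cite{Mar1} and only accompanied by two sentences of commentary, namely that the high-energy annulus $\sA_m$ is obtained ``by a simple argument \emph{\`a la} Poincar\'e, on the creation of hyperbolic orbits near perturbations of resonant tori'', while ``the other annuli are proved to exist by minimization arguments of Morse and Hedlund''. There is therefore no proof here against which your proposal can be checked in detail.

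That said, your outline is broadly compatible with those hints. Your Jacobi-metric/flat-limit argument for large $e$ is a reasonable substitute for the Poincar\'e-type perturbative argument alluded to in the paper, and your use of minimizing closed geodesics in a homology class to produce the lower-energy annuli is exactly the Morse--Hedlund framework the paper invokes. The countable-intersection strategy for producing the residual $\jU$ is also the natural one.

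Where your sketch is thin compared to what the statement actually demands is in the internal structure of each annulus as required by Definition~\ref{def:annuli}: you do not address the monotonicity of the frequency map $e\mapsto\omega(e)$, nor the existence of the finite covering $I=\cup I_i^*$ by subintervals on which each $\gamma_e$ carries a \emph{transverse homoclinic} orbit, with the disjointness condition on consecutive homoclinics. These are not automatic consequences of minimization and require separate genericity arguments. Likewise, for item~(3) your proposal builds $\sA_\bu$ but does not explain why a \emph{single} singular annulus, fixed once and for all, admits transverse heteroclinic connections with the first annulus $\sA_0(c)$ of \emph{every} chain $\bA(c)$; the paper stresses that $\sA_\bu$ is independent of $c$, and the mechanism by which one object connects to infinitely many chains is the delicate point, not merely a Kupka--Smale perturbation class by class. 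Since the actual proof lives in \cite{Mar1}, you would need to consult that reference to see how these issues are handled.
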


\vskip2mm

The existence of the ``high energy annuli'' $\sA_m$ is proved by a simple argument {\em \`a la} Poincar\'e,
on the creation of hyperbolic orbits near perturbations of resonant tori, so we call $e_P$ the Poincar\'e energy
for the class $c$. The other annuli are proved to exist by minimization arguments of Morse and Hedlund.

\vskip3mm

There exist in general several singular annuli with the previous intersection property, but one will be enough 
for our future needs.  We say that a chain with $I_0$ and $I_m$ as in the first item above is {\em biasymptotic 
to $\ha e:=\Max U$ and $+\infty$}. It may be useful to rephrase  Theorem \ref{thm:genprop} in a more concise way.

\begin{cor} For $U\in\jU$ and for each pair of classes $c,c'\in \HH_1(\T^2,\Z)$, there exists a generalized 
chain of annuli,  union of $(\sA_i)_{0\leq i\leq m}$, $(\sA'_i)_{0\leq i\leq m'}$
and $\sA_\bu$, such that $(\sA_i)_{0\leq i\leq m}$ and $(\sA'_i)_{0\leq i\leq m'}$ are biasymptotic to $\ha e$
and $+\infty$ and realize $c$ and $c'$ respectively.
\end{cor}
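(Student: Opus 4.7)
The plan is to unpack the three items of Theorem~\ref{thm:genprop} and assemble them directly; the corollary amounts to a concise restatement of the theorem. Given $U\in\jU$ and primitive classes $c,c'\in \HH_1(\T^2,\Z)$, the first step is to apply item~1 of the theorem separately to each class. This yields chains $\bA(c)=(\sA_0,\ldots,\sA_m)$ realizing $c$ and $\bA(c')=(\sA'_0,\ldots,\sA'_{m'})$ realizing $c'$, defined over ordered families of intervals whose extreme intervals have the forms $]\Max U,e_m]$ and $[e_P,+\infty[$ (the constants $e_m$ and $e_P$ may a priori depend on the class, but this plays no role). By the very terminology introduced just before the corollary, both chains are biasymptotic to $\ha e=\Max U$ and $+\infty$.

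The second step is to invoke item~3 to produce a singular annulus $\sA_\bu$. The key observation is that, as stated, item~3 provides a \emph{single} $\sA_\bu$ which admits transverse heteroclinic connections with the first annulus of \emph{every} chain given by item~1. Applying this to the two specific chains above, $\sA_\bu$ has such connections with both $\sA_0$ and $\sA'_0$, that is,
\begin{equation*}
W^u(\sA_{0})\cap W^s(\sA_\bu)\neq \emptyset,\qquad W^s(\sA_{0})\cap W^u(\sA_\bu)\neq \emptyset,
\end{equation*}
and analogously for $\sA'_0$ in place of $\sA_0$, with all intersections transverse in $\A^2$.

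The third and final step is to check that the union $(\sA_i)_{0\leq i\leq m}\cup(\sA'_i)_{0\leq i\leq m'}\cup\sA_\bu$ matches Definition~\ref{def:chains} of a generalized chain of annuli realizing $c$ and $c'$. The internal chain conditions for each of the two families of annuli were already ensured by item~1, while the heteroclinic conditions tying each chain to $\sA_\bu$ are exactly what item~3 supplies. There is no genuine obstacle in this argument; the only point that might seem delicate is to guarantee that the \emph{same} $\sA_\bu$ works simultaneously for both chains, and this is built into the quantifier ``every first annulus'' appearing in item~3 of Theorem~\ref{thm:genprop}.
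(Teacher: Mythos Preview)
Your proposal is correct and matches the paper's intent: the corollary is presented there as a mere rephrasing of Theorem~\ref{thm:genprop}, with no separate proof given. Your unpacking of items~1 and~3 and the observation that the quantifier ``every first annulus'' in item~3 guarantees a single $\sA_\bu$ serving both chains is exactly the right justification.
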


In the $y$--plane, one therefore gets the following picture for the projection of generalized chains of annuli,
along some lines of rational slope (which obviously correspond to integer homology classes when the energy tends
to $+\infty$, by Theorem \ref{thm:genprop}, 2).

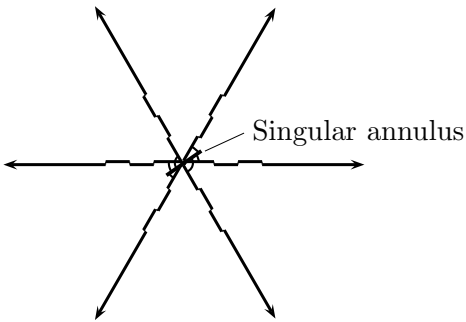
\begin{figure}[h]
\begin{center}
\begin{pspicture}(0cm,2.3cm)
\psset{unit=.4cm}
\rput{-10}{
\psline[linewidth=0.5mm](-.5,-.5)(.5,.5)
}
\psline[linewidth=0.1mm](.7,.4)(2,1)
\rput(5.8,1){Singular annulus}
\pscircle[fillstyle=solid,fillcolor=white](0,0){.07} 
\psline[linewidth=0.4mm](0.06,0.05)(1,0.05)
\psline[linewidth=0.1mm](1,0.05)(1,-0.05)
\psline[linewidth=0.4mm](1,-0.05)(1.8,-0.05)
\psline[linewidth=0.1mm](1.8,0.05)(1.8,-0.05)
\psline[linewidth=0.4mm](1.8,0.05)(2.6,0.05)
\psline[linewidth=0.1mm](2.6,0.05)(2.6,-0.05)
\psline[linewidth=0.4mm]{->}(2.6,-0.05)(6,-0.05)
\psline[linewidth=0.4mm](-0.06,0.05)(-1,0.05)
\psline[linewidth=0.1mm](-1,0.05)(-1,-0.05)
\psline[linewidth=0.4mm](-1,-0.05)(-1.8,-0.05)
\psline[linewidth=0.1mm](-1.8,0.05)(-1.8,-0.05)
\psline[linewidth=0.4mm](-1.8,0.05)(-2.6,0.05)
\psline[linewidth=0.1mm](-2.6,0.05)(-2.6,-0.05)
\psline[linewidth=0.4mm]{->}(-2.6,-0.05)(-6,-0.05)
\rput{120}{
\pscircle[fillstyle=solid,fillcolor=white](0,0){.07} 
\psline[linewidth=0.4mm](0.06,0.05)(1,0.05)
\psline[linewidth=0.1mm](1,0.05)(1,-0.05)
\psline[linewidth=0.4mm](1,-0.05)(1.8,-0.05)
\psline[linewidth=0.1mm](1.8,0.05)(1.8,-0.05)
\psline[linewidth=0.4mm](1.8,0.05)(2.6,0.05)
\psline[linewidth=0.1mm](2.6,0.05)(2.6,-0.05)
\psline[linewidth=0.4mm]{->}(2.6,-0.05)(6,-0.05)
\psline[linewidth=0.4mm](-0.06,0.05)(-1,0.05)
\psline[linewidth=0.1mm](-1,0.05)(-1,-0.05)
\psline[linewidth=0.4mm](-1,-0.05)(-1.8,-0.05)
\psline[linewidth=0.1mm](-1.8,0.05)(-1.8,-0.05)
\psline[linewidth=0.4mm](-1.8,0.05)(-2.6,0.05)
\psline[linewidth=0.1mm](-2.6,0.05)(-2.6,-0.05)
\psline[linewidth=0.4mm]{->}(-2.6,-0.05)(-6,-0.05)
}
\rput{60}{
\pscircle[fillstyle=solid,fillcolor=white](0,0){.07} 
\psline[linewidth=0.4mm](0.06,0.05)(1,0.05)
\psline[linewidth=0.1mm](1,0.05)(1,-0.05)
\psline[linewidth=0.4mm](1,-0.05)(1.8,-0.05)
\psline[linewidth=0.1mm](1.8,0.05)(1.8,-0.05)
\psline[linewidth=0.4mm](1.8,0.05)(2.6,0.05)
\psline[linewidth=0.1mm](2.6,0.05)(2.6,-0.05)
\psline[linewidth=0.4mm]{->}(2.6,-0.05)(6,-0.05)
\psline[linewidth=0.4mm](-0.06,0.05)(-1,0.05)
\psline[linewidth=0.1mm](-1,0.05)(-1,-0.05)
\psline[linewidth=0.4mm](-1,-0.05)(-1.8,-0.05)
\psline[linewidth=0.1mm](-1.8,0.05)(-1.8,-0.05)
\psline[linewidth=0.4mm](-1.8,0.05)(-2.6,0.05)
\psline[linewidth=0.1mm](-2.6,0.05)(-2.6,-0.05)
\psline[linewidth=0.4mm]{->}(-2.6,-0.05)(-6,-0.05)
}
\psarc(0,0){.6}{33}{66}
\psarc(0,0){.5}{3}{33}
\psarc(0,0){.3}{-66}{33}
\psarc(0,0){.6}{217}{237}
\psarc(0,0){.5}{178}{217}
\psarc(0,0){.3}{130}{217}
\end{pspicture}
\vskip2cm
\caption{Projected generalized chains of annuli in a classical system}
\end{center}
\end{figure}

\vskip-1cm



\section{Chains of cylinders for $\H_n$}
Here we call {\em cylinder} for a vector field defined on the cylinder $\A^3$ a normally hyperbolic invariant 
manifold with boundary, diffeomorphic to $\T^2\times [0,1]$. In particular, the stable and unstable manifolds 
of a cylinder are well-defined and the definition of heteroclinic connections between cylinders can be properly stated.
In this section, we prove the existence of  a family of chains of cylinders for the truncated system $\H_n$
defined in~(\ref{eq:truncham}), in the energy level  $\H_n\inv(\pdemi)$, whose  projection by $\Pi$ forms an 
asymptotically  dense subset of the unit sphere.


\subsection{Cylinders and chains}

\paraga  Let us set out a first definition in a context adapted to our needs (but which can obviously be adapted
to more general ones).

\begin{definition} \label{def:invcyl}  Let $X$ be a vector field on $\A^3$. 

\begin{itemize}
\item We say that $\jC\subset \A^3$ is a 
{\em $C^p$ invariant cylinder with boundary} for $X$ if $\jC$ is a submanifold  of 
$\A^3$,  $C^p$--diffeomorphic to $\T^2\times [0,1]$, such that $X$ is everywhere tangent 
to $\jC$ and is moreover tangent to $\partial \jC$ at each point of $\partial \jC$.  

\item Given an invariant cylinder with boundary $\jC$, we say that it is {\em normally hyperbolic}
when there exist a neighborhood $N$ of $\jC$ and a complete vector field $X_\circ$ on $\A^3$ such 
that  $X\equiv X_\circ$ in $N$ and such that $X_\circ$ admits a normally hyperbolic invariant submanifold 
$\jC_\circ$,  diffeomorphic to  $\T^2\times\R$, which contains $\jC$.
\end{itemize}
\end{definition}

Note first that $\jC$ is invariant under the  flow, thanks to the tangency hypothesis on the boundary. 
In particular, both connected components of $\partial \jC$ are invariant $2$-dimensional tori.
In the following, when the context is clear, normally hyperbolic cylinders with boundary  will be called  
{\em compact invariant cylinders} for short.

\vskip1mm

The main interest of the previous definition is that it is possible to properly define the stable and 
unstable manifolds of compact invariant cylinders.
Indeed, one checks that the stable manifold $W^{ss}(x)$ of a point $x\in \jC$ is well-defined and 
{\em independent of the choice of} $(X_\circ,\jC_\circ)$ (recall that $W^{ss}(x)$ is the set
of all initial conditions $y$ such that ${\rm dist}(\Phi^{tX}(x),\Phi^{tX}(y))$ tends to $0$ at an 
exponential rate $e^{-ct}$, where $c$ dominates the contraction exponent on $\jC_\circ$).
The stable  manifold of  $\jC$ is then well-defined as the union of the stable manifolds $W^{ss}(x)$ 
for $x\in \jC$, which turns out to have the same regularity as $\jC$. The same remark obviously also 
holds for the unstable manifolds. 


\paraga  In addition to our previous invariant cylinders, it will be necessary to introduce slightly more 
complicated objects which we call {\em singular cylinders}. Recall that $\cA_\bu$ is the standard
singular annulus defined in the previous section.

\begin{definition}  Let $X$ be a vector field  on $\A^3$. 
\begin{itemize}
\item We say that $\jC_\bu\subset \A^3$ is an
{\em invariant singular cylinder} for $X$ if $\jC_\bu$ is a $C^1$ submanifold with boundary of 
$\A^3$,   diffeomorphic to $\T\times \cA_\bu$, such that $X$ is everywhere tangent to $\jC_\bu$ 
(and is moreover tangent to $\partial \jC_\bu$ at each point of $\partial \jC_\bu$).  

\item Given an invariant singular cylinder $\jC_\bu$, we say that it is {\em normally hyperbolic}
when there exist a neighborhood $N$ of $\jC_\bu$ and a complete vector field $X_\circ$ on $\A^3$ 
such that  $X\equiv X_\circ$ on $N$ and which admits a normally hyperbolic invariant submanifold 
$\jC_\circ$, diffeomorphic to  $\T^2\times\R$, which contains  $\jC_\bu$. 
\end{itemize}
\end{definition}

As above, we simply say {\em compact singular cylinders} instead of normally hyperbolic compact 
invariant singular cylinders. Again, the  stable and unstable manifolds  of a point 
$x\in \jC_\bu$ are well-defined  and independent of the choice of  $(X_\circ,\jC_\circ)$,  and this is also 
the case for the stable and unstable manifolds of  $\jC_\bu$.


\paraga Let $H$ be a Hamiltonian on $\A^3$ and let $\e$ be a regular value of $H$.

\begin{definition}
A {\em chain of cylinders} for $H$ at energy $\e$ is a finite family $(\jC_i)_{1\leq i\leq i^*}$ of 
compact invariant cylinders {\em or singular cylinders}, contained in $H\inv(\e)$,
such that $W^u(\jC_i)$ intersects $W^s(\jC_{i+1})$ for $1\leq i\leq i^*-1$. 
\end{definition}

Note in particular that, for the sake of simplicity, we do not make any distinction between ``regular'' cylinders
and singular cylinders in a chain. Note also that the definition here slightly differs from that of
chains of annuli above. In the following we will have to add suitable transversality conditions for the 
various homoclinic and  heteroclinic  intersections in a chain of cylinders, which could be stated in 
a general context but will  be easier to make explicit  in the case of our Hamiltonians $H_n$, 
this will be done in  Section~\ref{sec:pert}.


\subsection{Cylinders for $\H_n$}\label{sec:cyl}

\paraga We now go back to the truncated Hamiltonian $\H_n$ defined in (\ref{eq:truncham}). 
Let $k=(k_2,k_3)\in\Z^2$ be a primitive integer vector (that is, $k_2$ and $k_3$ are coprime) 
and let $\S_{k}$ be the half great circle
of the unit sphere $\S$ formed by the actions $r=(r_1,\ov r)=(r_1,r_2,r_3)$ such that
$$
\ov r\cdot  k=0,\qquad (-r_3,r_2) \cdot  k \geq 0,\qquad r\in \S.
$$
We denote by
$\dd$ the Hausdorff distance for compact subsets of $\R^3$.
The main result of this section is the following.


\begin{prop}\label{prop:cyl}  Let $U\in \jU$ and set, for $(\th,r)\in\A^3$
$$
\H_n(\th,r)=\pdemi\norm{r}^2+\tfrac{1}{n} U(\th_2,\th_3).
$$
Fix $ k$ as above and fix $\de >0$. Then:
\begin{itemize}
\item  there is $n_0(k)>0$ such that for $n\geq n_0(k)$, there are 
regular cylinders $\jC_{-m},\ldots,\jC_{-1}$, $\jC_0$, $\jC_1,\ldots,\jC_m$, where the 
integer $m$ depends on $k$, which satisfy
\beq\label{eq:prox}
\dd\big(\cup_j\Pi(\jC_j),\S_k\big)<\de,
\eeq
and such that both ordered families $\jC_{-m},\ldots,\jC_{-1}$, $\jC_0$, $\jC_1,\ldots,\jC_m$
and $\jC_{m},\ldots,\jC_{1}$, $\jC_0$, $\jC_{-1},\ldots,\jC_{-m}$ are chains;

\item there exist two singular cylinders $\jC_\bu^-$ and $\jC_\bu^+$, {\em independent of $k$}, 
such that the extremal cylinders $\jC_{-m}$ and $\jC_m$ admit transverse heteroclinic connections with
$\jC_\bu^-$ and $\jC_\bu^+$ respectively;

\item each cylinder $\jC_j$ admits a foliation by isotropic $\T^2$ tori, such that the union of the subfamily
of dynamically minimal tori is a dense subset of $\jC_j$, and each singular cylinder $\jC^\pm_\bu$
admits a foliation by isotropic $\T^2$ tori on an open and dense subset, such that the union of the subfamily
of dynamically minimal tori is a dense subset of $\jC^\pm_\bu$. 
\end{itemize}
\end{prop}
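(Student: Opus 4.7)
The starting point is the product structure
\[\H_n(\th, r) = \pdemi r_1^2 + \tilde C_n(\bar\th, \bar r), \qquad \tilde C_n(\bar\th, \bar r) := \pdemi \norm{\bar r}^2 + \tfrac{1}{n}U(\bar\th),\]
in which the second summand is independent of $(\th_1, r_1)$. Hence $r_1$ is a first integral of $X^{\H_n}$, $\th_1$ drifts linearly at speed $r_1$, and the $(\bar\th, \bar r)$-dynamics obeys $X^{\tilde C_n}$ on $\A^2$. The change of variables $(\bar\th, \bar r, t) \mapsto (x, y, s) = (\bar\th, \sqrt n\, \bar r, t/\sqrt n)$ conjugates $X^{\tilde C_n}$ to $X^{C_U}$, with $C_U = n\tilde C_n$, so every annulus, chain and singular annulus of $C_U$ produced by Theorem~\ref{thm:genprop} pulls back to an object of the same kind for $\tilde C_n$ over $n$-rescaled energy intervals.

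Fix the primitive class $c = (-k_3, k_2) \in \HH_1(\T^2, \Z)$ (perpendicular to $k$) and apply the Corollary of Theorem~\ref{thm:genprop} to obtain a chain $(\sA_0, \ldots, \sA_m)$ of annuli realizing $c$, biasymptotic to $\ha e$ and $+\infty$, together with a singular annulus $\sA_\bu$ transversely linked to $\sA_0$. Let $\tilde\sA_i^n$ denote the corresponding rescaled annulus for $\tilde C_n$ over $I_i/n$, the last one truncated to $[e_P/n, \pdemi]$. For $j \in \{1, \ldots, m\}$ and $\sigma \in \{+,-\}$, set
\[\jC_{\sigma j} = \Big\{(\th_1, \bar\th, r_1, \bar r) \in \A^3 : (\bar\th, \bar r) \in \tilde\sA_{m-j}^n,\ r_1 = \sigma\sqrt{1 - 2\tilde C_n(\bar\th, \bar r)}\,\Big\},\]
and define $\jC_\bu^\pm \cong \T \times \sA_\bu$ by the analogous construction applied to the rescaled $\sA_\bu$ with $r_1 = \pm\sqrt{1 - 2\tilde C_n}$; since the energies of $\sA_\bu$ lie close to $\ha e$, the cylinder $\jC_\bu^\pm$ sits near $D_\pm$. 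The central cylinder $\jC_0$ is obtained by gluing both $\sigma$-branches of $\tilde\sA_m^n$ across the fold $r_1 = 0$ via the smooth parametrization
\[(\th_1, \varphi, r_1) \in \T^2 \times [-r_M, r_M] \longmapsto (\th_1,\, \phi_s(\varphi,\, \pdemi - \pdemi r_1^2),\, r_1),\]
where $\phi_s$ is the embedding of $\tilde\sA_m^n$ given by (\ref{eq:embedd}); this yields a smooth submanifold of $\A^3$ diffeomorphic to $\T^2 \times [0, 1]$ containing $r_1 = 0$ in its interior.

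The required properties all follow from the product structure. Normal hyperbolicity in the sense of Definition~\ref{def:invcyl} is obtained by extending each underlying classical annulus slightly in energy and lifting to an open $\jC_\circ \cong \T^2 \times \R$, which inherits the hyperbolic normal splitting from the classical annulus. Both chain orderings, and the heteroclinic links $\jC_{\pm m} \leftrightarrow \jC_\bu^\pm$, are obtained by lifting each bidirectional transverse classical heteroclinic at $C_U$-energy $e$ (provided by Definition~\ref{def:chains} and the Corollary) to one at $r_1 = \sigma\sqrt{1 - 2e/n}$ for the matching sign in $\H_n^{-1}(\pdemi)$; transversality in this 5-dimensional energy level follows from that in $C_U^{-1}(e)$, the extra normal direction being furnished by the conservation of $r_1$. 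For the approximate density, item 2 of Theorem~\ref{thm:genprop} yields that $\Pi(\tilde\sA_m^n \cap \tilde C_n^{-1}(\tilde e))$ is within $o(1)$ of $\sqrt{2\tilde e}\,c/\norm{c}$ as $n\tilde e \to \infty$, while the identity $\norm{\bar r}^2 = 2\tilde e - \tfrac{2}{n}U(\bar\th)$ forces $(r_1, \bar r) \to D_\pm$ when $\tilde e \to 0$ and $n$ is large; together these place $\Pi(\jC_0)$, hence $\bigcup_j \Pi(\jC_j)$, $\de$-close to $\S_k$ for $n$ sufficiently large. Each cylinder is foliated by the isotropic invariant tori $T_{\tilde e} = \T \times \ga_{\tilde e} \times \{r_1 = \sigma\sqrt{1 - 2\tilde e}\}$, with isotropy immediate since $r_1$ is constant on $T_{\tilde e}$ and $\ga_{\tilde e}$ is a single orbit; the monotonicity of both $\om_{\tilde C_n}(\tilde e)$ (Definition~\ref{def:annuli}) and of $r_1(\tilde e)$ renders the frequency ratio non-constant, so minimal tori are dense. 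The same argument applied separately to $\sA_\bu^>, \sA_\bu^<, \sA_\bu^0$ yields the foliation of $\jC_\bu^\pm$ on the open dense complement of the lifts of the hyperbolic fixed point and its homoclinic orbits. The principal technical delicacy is the smoothness and normal hyperbolicity of $\jC_0$ across $r_1 = 0$: the apparent square-root singularity of $r_1 = \pm\sqrt{1 - 2\tilde C_n}$ is only the fold of the smooth change of variable $r_1 \mapsto \tilde C_n = \pdemi - \pdemi r_1^2$ and is absorbed by the parametrization above.
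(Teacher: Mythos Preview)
Your proof is correct and follows essentially the same approach as the paper: exploit the product structure $\H_n=\pdemi r_1^2+\tilde C_n$, rescale the classical factor to reduce to $C_U$ and invoke Theorem~\ref{thm:genprop}, lift each annulus to a cylinder via $r_1=\pm\sqrt{1-2\tilde C_n}$ (with $\jC_0$ parametrized by $r_1$ to absorb the fold), lift the transverse classical heteroclinics to get the chain structure, and obtain density of minimal tori from the monotonicity of the classical frequency together with that of $r_1(\tilde e)$. The only cosmetic differences are that the paper rescales the full Hamiltonian $\H_n\to\H_1$ at energy $n/2$ rather than just the classical factor, and that for the torsion on $\jC_0$ the paper passes through a linear change of variables to $c=(1,0)$ and uses the high-energy asymptotics $\om'(e)\to\infty$, whereas your direct sign check on $(\om/r_1)'$ already suffices.
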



We will moreover prove that the cylinders $\jC_j$ and $\jC_{-j}$ are exchanged with one another 
by a natural symmetry.


\begin{proof} We can assume without loss of generality that $\ha e=\Max U=0$.
We first perform a standard rescaling to get rid of the parameter $n$, namely, setting for $a>0$:
\begin{equation}\label{eq:rescaling1}
\sig_a(\th,r)=(\th,a\,r),
\end{equation}
one immediately checks the conjugacy relation
\begin{equation}\label{eq:rescaling2}
\Phi_{nt}^{\H_n}=\sig_{\sqrt n}\inv\circ \Phi_t^\H\circ\sig_{\sqrt n},
\end{equation}
where $\H:=\H_1$, while $\sig_{\sqrt n}$ sends the energy level $\H_n\inv(\pdemi)$ onto the level 
$\H\inv(\tfrac{n}{2})$. We can therefore examine the behavior of the system $\H$ at high energy $\e$
and get our results by inverse rescaling. We will fix two coprime integers $(k_2,k_3)$ and 
concentrate on the neighborhood of the half great circle $\sqrt{2\e}\,\S_k$ on the sphere of radius 
$\sqrt{2\e}$.


\vskip2mm $\bu$ We will apply Theorem~\ref{thm:genprop} to $c\sim (-k_3,k_2)$.
Reversing the order of the intervals in this theorem, for compatibility reasons with the statement 
of Proposition~\ref{prop:cyl},  there is an ordered family  $I_m,\ldots, I_0$, with  $I_m=\,]0,e_m]$ 
and $I_0=[e_P,+\infty[$  such that  the system $C_U$ admits a chain of annuli 
$\sA_m,\ldots,\sA_0$ realizing $c$ and defined over $I_m,\ldots, I_0$. For $0\leq j\leq m$,
we denote by $\phi_j:\T\times I_j\to \A^2$ the embedding of $\sA_j$ satisfying~(\ref{eq:embedd}),
and for $e\in I_j$, we denote by $\Ga_j(e)=\phi_j(\T\times\{e\})$ the periodic orbit at 
energy $e$ in $\sA_j$.


\vskip2mm $\bu$ Let us fix an energy $\e>e_P$. 
The level $\H\inv(\e)$ contains the union
$$
\bigcup_{0\leq e_1\leq \e} \big\{\th_1\in\T,\ \pdemi r_1^2=e_1\big\}\times C_U\inv(\e-e_1).
$$
which will serve as a guide to construct embeddings for our cylinders.


\vskip2mm $\bu$ Consider first an annulus $\sA_j$ with $1\leq j\leq m$ and set
$I_j=[a_j,b_j]$ if $1\leq j\leq m-1$ and $I_m=\,]0,b_m]$, so that $I_j$ is contained in 
$]0,e_P]$ (recall that $I_0=[e_P,+\infty[$). We 
introduce the map
\beq\label{eq:embedF+}
\begin{array}{ll}
&F_j^+: \T^2\times I_j\longrightarrow\H\inv(\e)\subset\A\times\A^2\\
&F_j^+(\ph_1,\ph_2,e)=\Big(\big(\ph_1,\sqrt{2(\e-e)}\big),\phi_j(\ph_2,e)\Big).
\end{array}
\eeq
One immediately checks that $F^+_j$ is an embedding. Let $\jC_j\subset \H\inv(\e)$ be its image. Then
$\jC_j$ is a cylinder (diffeomorphic to the product of $\T^2$ with an interval),  which is a circle bundle over
the annulus $A_j$. So $\jC_j$ admits a regular foliation 
whose leaves are the tori
$$
\jT_{e}=F_j^+(\T^2\times\{e\}).
$$
The torus $\jT_{e}$ is the direct product of the circle $\T\times\{\sqrt{2(\e-e)}\}$ in the first
factor of the product $\A\times\A^2$ with the hyperbolic periodic orbit 
$\Ga_j(e)$ in the second factor. For each point $z=(z_1,z_2)\in\A\times\A^2$ in $\jC_j$, there 
is a single hyperbolic orbit in the annulus $\sA_j$ which contains $z_2$. This yields a  decomposition of  
the tangent  space $T_z\H\inv(\e)$ as a sum $T_z\jC_j\oplus E^+(z)\oplus E^-(z)$, 
where $E^\pm(z)$ are the stable and unstable directions of that hyperbolic orbit at the point $z_2$. 
All these considerations also make sense for any small enough hyperbolic continuation of $\sA_j$, which
immediately proves that $\jC_j$ is a compact invariant cylinder in the sense of Definition \ref{def:invcyl}.


\vskip2mm $\bu$ One gets a parallel construction using the embedding
\beq\label{eq:embedF-}
\begin{array}{ll}
&F_j^-: \T^2\times I_j\longrightarrow\H\inv(\e)\subset\A\times\A^2\\
&F_j^-(\ph_1,\ph_2,e)=\Big(\big(\ph_1,-\sqrt{2(\e-e)}\big),\phi_j(\ph_2,e)\Big).
\end{array}
\eeq
whose image will be denoted by $\jC_{-j}$ and is a compact invariant cylinder as well. 
Moreover, $\jC_j$ and $\jC_{-j}$ are obviously symmetric under $r_1\mapsto -r_1$.


\vskip2mm $\bu$ To exhibit the singular cylinders, one fixes an embedding $\phi_\bu:\cA_\bu\to\A^2$
satisfying~(\ref{eq:fibu}) (with suitable parameters),
whose image is the singular annulus $\sA_\bu$ of the system $C_U$ depicted in Theorem~\ref{thm:genprop}. 
This enables one to introduce two maps
\beq\label{eq:embedFbu}
\begin{array}{ll}
&F_\bu^\pm:\T\times\cA_\bu\longrightarrow \H\inv(\e)\\
&F_\bu^\pm\big(\ph_1,(\ph_2,r_2)\big)=
\Big(\Big(\ph_1,\pm \sqrt{2(\e-C_U\big(\phi_\bu\big(\ph_2,r_2)\big)}\Big),\phi_\bu(\ph_2,r_2)\Big).
\end{array}
\eeq
Again, one easily checks that these are embeddings. Their images $\jC_\bu^\pm$ are singular cylinders for $\H$,
which are symmetric under $r_1\mapsto-r_1$.


\vskip2mm $\bu$ The case of $\sA_0$ is slightly different, since the energy $e$ can no longer be used as a global
parameter on it. Rather,  we introduce the interval 
$J_0=\big[-\sqrt{2(\e-e_P)},\sqrt{2(\e-e_P)}\big]$ and the map
\beq\label{eq:embedF0}
\begin{array}{ll}
&F_0 :\T^2\times J_0\longrightarrow\H\inv(\e)\subset\A\times\A^2\\
&F_0(\ph_1,\ph_2,r_1)=\Big((\ph_1,r_1),\phi_0\big(\ph_2,\e-\pdemi r_1^2\big)\Big).
\end{array}
\eeq
One easily checks that $F_0$ is again an embedding and that its image $\jC_0$ is a compact invariant 
cylinder (note that now  $\jC_0$ is a two sheeted ramified circle bundle over the corresponding part of $\sA_0$). 
This completes the construction of the family $\jC_{-m},\ldots,\jC_m$ and $\jC_\bu^\pm$.

%


\vskip2mm $\bu$ Fix now an integer $j\in \{0,\ldots,m-1\}$ and fix $e\in[\Max I_{j+1},\Min I_j]$. By definition of a chain
of annuli, there exists a heteroclinic connection 
$$
\Om^{j+1}_{j}\subset W^u(\Ga_j(e))\cap W^s(\Ga_{j+1}(e))
$$
between extremal periodic orbits of  $\sA_{j}$ and $\sA_{j+1}$, which gives rise to an 
{\em annulus} of heteroclinic orbits (diffeomorphic to $\A$) between $\jC_j$ and $\jC_{j+1}$, namely
$$
\big(\T\times\{\sqrt{2(\e-e)}\}\big)\times \Om_j^{j+1}.
$$
 Again, a parallel construction using now the heteroclinic connection
$$
\Om^j_{j+1}\subset W^u(\Ga_{j+1}(e)) \cap W^s(\Ga_j(e))
$$ 
proves the existence of an annulus of heteroclinic orbits 
between $\jC_{-(j+1)}$ and 
$\jC_{-j}$. We indeed get more heteroclinic connections by considering the whole overlapping 
energy interval $[\Max I_{j+1},\Min I_j]$.
This proves that the family $\jC_{-m},\ldots,\jC_m$ is a chain of cylinders. 

\vskip1mm
The proof for the
opposite ordering $\jC_{m},\ldots,\jC_{-m}$ is similar.
Finally, the existence of  annuli of heteroclinic connections between $\jC_{\pm m}$ and 
$\jC_\bu^\pm$ follows from exactly the same considerations as above.


\vskip2mm $\bu$ The cylinders $\jC_j$, $1\leq j\leq m$, are foliated by the invariant tori $\jT_{e}$
for $e\in I_j$. Let us prove that, when $\e$ is large enough, they are dynamically minimal for  
$e$ in a dense subset of  $I_j$. Fix the integer $j$ and let   $\om_j:I_j\to \R$ be the the frequency map 
of the annulus $\sA_j$.
The frequency on the first factor (according to the component form of $F^+_j$) of $\jT_e$, that is,
the circle $\T\times\{\sqrt{\e-e}\}$ is $\sqrt{\e-e}$, so 
that the frequency vector of $\jT_e$ is 
$$
\Om(e)=\big(\sqrt{\e-e},\om_j(e)\big).
$$
Now, by Definition~\ref{def:annuli} and by compactness
of $I_j$, 
the frequency map  $\om_j:I_j\to \R$ of the annulus $\sA_j$ satisfies 
$$
\om'_j(e)\geq \mu >0
$$ 
for $e\in I_j$.  
This proves that the frequency curve $\Om(e)\subset\R^2$ of $\jC_j$  is transverse to each 
vector line in $\R^2$, so that the ratio $\Om_2/\Om_1$ is irrational for $e$ in a dense subset of $I_j$. 
The corresponding torus $\jT_{e}$ is therefore dynamically minimal. 
Similar arguments show the same property for the cylinders $\jC_j$, $-m\leq j\leq -1$, as well as for
the singular cylinders $\jC_\bu^\pm$.


\vskip2mm $\bu$ It remains to examine the torsion properties of $\jC_0$. Observe that
up to a standard linear change of variables (see \cite{Mar1}), one can assume that $c=(1,0)$. In the new 
variables, that we still denote by $(\th,r)$, the kinetic part of the Hamiltonian $\H$ takes the form 
$$
T(r)=\pdemi\big(r_1^2+Q(r_2,r_3)\big),
$$
and for $\e$ large enough $X^T$ is the dominant term of $X^\H$ since $X^U$ is bounded. 
Moreover, when the energy $e$ of the classical system is large enough, 
$$
C_u(\th_2,\th_3,r_2,r_3)\sim \pdemi Q(r_2,r_3).
$$
Now, by the asymptotic property of the projection of the Poincar\'e annulus $\sA_0$ (that is, the annulus $\sA_m$ in 
Theorem~\ref{thm:genprop}), the frequency map of $\sA_0$ is a $o(1)$ $C^2$ pertubation of the map
$$
\om: e\sim\pdemi Q(r_2,r_3)\longmapsto \d_{r_2} Q(r_2,r_3).
$$
As a consequence the frequency vector of the corresponding tori $\pm\jT(e)$ on $\jC_0$ is a small $C^2$
perturbation of 
$$
\Om(e)=(\sqrt{\e-e},\om(e)).
$$
Since $\om'(e)\to+\infty$ when $e\to\infty$,
this proves now the density of the torsion property for the two connected components of $\jC_0$ 
fibered over the subannulus of $\sA_0$ defined over $[e^*,\e[$, where $e^*$ is  large 
enough  (we obviously always require $\e>e^*$ for being consistent). 

The case of the remaining components of $\jC_0$ 
associated with the subannulus defined over $[e_P,e^*[$ is analogous to that of the cylinders $\jC_j$
since one can choose on this part a parametrization similar to those given by the embeddings $F_j^\pm$.

As a consequence, the union of the subset of dynamically minimal tori of  $\jC_0$ is dense
in $\jC_0$.


\vskip2mm $\bu$ It only remains to prove (\ref{eq:prox}), but this is an immediate consequence
of Theorem \ref{thm:genprop}, taking into account the reparametrization (\ref{eq:rescaling2}).
This concludes the proof.
\end{proof}

\begin{figure}[h]
\begin{center}
\begin{pspicture}(0cm,3cm)
\psset{unit=.7cm}
\psline[linewidth=0.1mm]{->}(0,-4)(0,4)
\psline[linewidth=0.1mm]{->}(-4,0)(4,0)
\psline[linewidth=0.1mm]{->}(3,3)(-3,-3)
\rput(-1.6,4){$\Pi(\jC_0)$}
\psline[linewidth=0.05mm](-.8,2)(-1.4,3.5)
\rput(2.6,4){$\Pi(\jC_1)$}
\psline[linewidth=0.05mm](1.4,2.2)(2.4,3.5)
\rput(-5,-1){$\Pi(\jC_{-1})$}
\psline[linewidth=0.05mm](-4,-1)(-1.7,-.55)
\pscircle[linewidth=0.1mm](0,0){3.04}
\rput{15}{
\psscalebox{2 -1}{\psarc[linewidth=0.1mm](0,0){1.5}{0}{180}}
}
\rput{15}{
\psscalebox{2 -1}{\psarc[linestyle=dashed,linewidth=0.1mm](0,0){1.5}{180}{360}}
}
\rput{-15}{
\psscalebox{1 2}{\psarc[linewidth=0.1mm](0,0){1.5}{40}{220}}
}
\rput{-15}{
\psscalebox{1 2}{\psarc[linewidth=0.5mm](0,0){1.55}{70}{190}}
}
\rput{-15}{
\psscalebox{1 2}{\psarc[linewidth=0.5mm](0,0){1.48}{50}{70}}
}
\rput{-15}{
\psscalebox{1 2}{\psarc[linewidth=0.5mm](0,0){1.48}{190}{210}}
}
\rput{-15}{
\psscalebox{1 2}{\psarc[linewidth=0.5mm](0,0){1.56}{45}{50}}
}
\rput{-15}{
\psscalebox{1 2}{\psarc[linewidth=0.5mm](0,0){1.56}{210}{215}}
}
\rput{-15}{
\psscalebox{1 2}{\psarc[linewidth=0.5mm](0,0){1.46}{40}{45}}
}
\rput{-15}{
\psscalebox{1 2}{\psarc[linewidth=0.5mm](0,0){1.46}{215}{220}}
}
\end{pspicture}
\vskip3cm
\caption{Projected cylinders}
\end{center}
\end{figure}
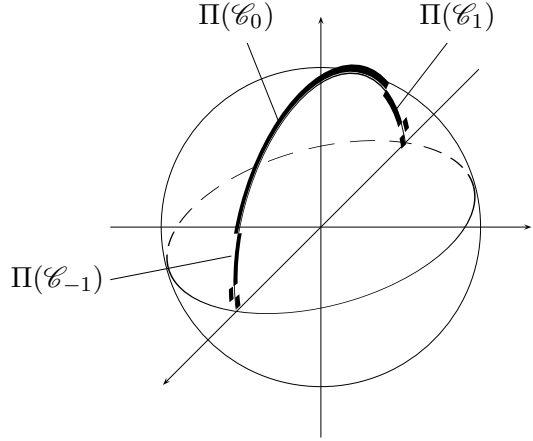


\begin{definition} \label{def:associated}
We will say that the cylinders $\jC_{\pm j}$ exhibited in Proposition \ref{prop:cyl} are associated with the annulus 
$\sA_j$, for $1\leq j\leq m$, and that $\jC_\bu^\pm$ is associated with $\sA_\bu$. In the same way, we say that
the embeddings $F^\pm_j$ and $F^\pm_\bu$ are associated with the cylinders $\jC_{\pm j}$ and $\jC_\bu^\pm$.
Note that each (singular) embedding $F^+_\bu$ or $F^-_\bu$ gives rise to three regular embeddings associated
with the embeddings $\phi^>_\bu$, $\phi^<_\bu$ and $\phi^0_\bu$ of Remark~\ref{rem:emb}.

We will in general denote by $\T^2\times \I$ the domain of an embedding associated with a regular cylinder, without 
specifying the particular form of the parametrization, and we will get rid of the various indices for the embeddings.
\end{definition}

In the following we will apply the previous proposition to an increasing family
of simple resonances $\cup_{1\leq \ell\leq n} \S_{k_\ell}$, and we need to exhibit a single chain of 
cylinders for $\H_n$ whose projection follows each semi-circle in this family.  To this aim,
we order the subset $\ha \Z^2\subset\Z^2$ formed by the primitive vectors $k$,  
in such a way that the resulting sequence $(k_\ell)_{\ell\geq1}$ satisfies 
$\norm{k_\ell}\leq \norm{k_{\ell+1}}$. For each $k\sim c\in\ha \Z^2$, we denote by 
$\cyl_{k}(\H_n)$ the set of cylinders associated with the annuli $\sA_0(c),\ldots,\sA_m(c)$ of 
Theorem~\ref{thm:genprop}, together with the singular cylinders $\jC_\bu^\pm$, so that
$\#\cyl_{k}(\H_n)=2m+3$. Finally we set
$$
\cyl(\H_n)=\bigcup_{1\leq\ell\leq n}\cyl_{k_\ell}(\H_n).
$$
Recall that by Proposition~\ref{prop:cyl}, 
for each $k\in\ha\Z^2$, the cylinders in $\cyl_{k}(\H_n)$ form  {\em two} chains depending on the way they are ordered, namely:
$$
\chain^+_{k}(\H_n):\quad \jC_\bu^-,\jC_{-m},\ldots,\jC_0,\ldots,\jC_m,\jC_\bu^+,
$$
and
$$
\chain^-_{k}(\H_n):\quad \jC_\bu^+,\jC_{m},\ldots,\jC_0,\ldots,\jC_{-m},\jC_\bu^-.
$$

\begin{definition} 
Let $n\geq1$ be fixed. We denote by $\chain(\H_n)$ the chain formed by the concatenation of the 
chains $\chain^{(-1)^\ell}_{k_\ell}(\H_n)$, $1\leq \ell\leq n$.
\end{definition}

Finally, we denote by $\tori(\H_n)$ the set of all $\T^2$--tori of the form $\jT_{e}$ contained in
the cylinders and singular cylinders of $\cyl(\H_n)$.
The rest of the paper is devoted to the construction of a perturbation which will create shadowing orbits
along $\chain(\H_n)$, passing close to a $\de$-dense family of dynamically minimal tori in 
$\tori(\H_n)$.


\section{Construction of the perturbation}\label{sec:pert}
In this section, we will construct the perturbation $f_n\in C^{\ka-1}(\A^3)$ such that the system  
$H_n=\H_n+f_n$ will admit the same family of cylinders as $\H_n$, with additional transversality properties 
for their invariant manifolds (the so-called splitting of separatrices). 


\subsection{The transversality conditions}
We first set out some definitions for the splitting of separatrices. In this section $f_n$ denotes a function in 
$C^{\ka-1}(\A^3)$ whose support is contained in the complement of the union of the cylinders of $\cyl(\H_n)$.
As a consequence,  each $\jC\in \cyl(\H_n)$ is also invariant under the flow generated by $H_n=\H_n+f_n$ 
and contained in  $H_n\inv(\pdemi)$.  We can therefore set
$$
\cyl(H_n):=\cyl(\H_n),\qquad \chain(H_n):=\chain(\H_n),\qquad \tori(H_n):=\tori(\H_n).
$$
Given a  point $x$ in a cylinder of $\cyl(H_n)$, note that its stable and unstable manifolds $W^{s,u}(x)$ are 
well-defined, and  that this is also the case for the stable and unstable manifolds of any $\jT\in\tori(H_n)$, 
that we denote by $W^{s,u}(\jT)$.
Let us now introduce our conditions.

\begin{definition}\label{def:weakhet}
Let $\jT\in\jC$ and $\jT'\in\jC'$ be two elements of ${\rm Tori}(H_n)$ (recall that $\jC$ and $\jC'$ can be 
singular  cylinders). We say that the pair $(\jT,\jT')$  satisfies {\em the weak splitting condition} if there exists  $a\in\jT$ 
whose  unstable  manifold $W^{uu}(a)$ intersects $W^s(\jC')$  transversely in $H_n\inv(\pdemi)$ at some 
point of  $W^s(\jT')$.
\end{definition}

Note that our definition differs from the usual ones in which one requires the Lagrangian invariant manifolds
of $\jT$ and $\jT'$ to intersect transversely. This latter splitting condition is obviously stronger than ours.

\begin{definition}\label{def:cylT}
Let $\jC$ be a regular cylinder in $\cyl(H_n)$ and let $F$ be the associated embedding, $F:\T^2\times \I\to H_n\inv(\pdemi)$ 
and for $s\in \I$, set $\jT(s)=F(\T^2\times\{s\})$.
We say that $\jC$ satisfies {\em Condition (T)} if there exists $\rho>0$ such that for each pair 
$(s,s')\in \I^2$ with $\abs{s-s'}<\rho$, the  pair $(\jT(s),\jT(s'))$ satisfies the weak splitting condition.
\end{definition}

Let now $\jC_\bu=\jC_\bu^\pm$ be a singular cylinder, together with its subcylinders $\jC_\bu^*$,
where $*$ stands for $>,<$ or $0$, and the associated embeddings $F_\bu^*$ defined over $\I^*$. 
Note that, by the construction of the previous section, the natural parameter in $\I^*$ is the energy
$e$ of the classical system $C_U$.
We set $\jT^*(e)=F_\bu^*(\T^2\times\{e\})$.

\begin{definition} \label{def:cylsingT} 
We say that $\jC_\bu$ satisfies {\em Condition~(T)} when each subcylinder $\jC_\bu^*$ satisfies Condition~(T) and when moreover
there exists $\rho>0$ such that if $\abs{e-e'}<\rho$, each pair $(\jT^<(e),\jT^0(e'))$ and $(\jT^>(e),\jT^0(e'))$
satisfies the weak splitting condition.
\end{definition}

In other words, we want two close enough subtori in $\jC_\bu$ to satisfy the weak splitting condition, 
when one of them is located in $\jC_\bu^0$ and the other one is in $\jC_\bu^>$ or $\jC_\bu^<$.

\vskip2mm

As for  chains now, we have to add a transversality condition for tori contained in distinct cylinders.

\begin{definition} 
We say that a  chain of cylinders $(\jC_k)_{1\leq k\leq k^*}$ satisfies 
{\em Condition~(S)} when each cylinder  $\jC_k$ satisfies Condition (T) and when moreover,
for  $1\leq k\leq k^*-1$, there are open subsets $O_k\subset\jC_k$ and $O_{k+1}\subset\jC_{k+1}$,
union of elements of $\tori(H_n)$, such that for each $\jT\subset O_k$ and $\jT'\in O_{k+1}$,
the pair $(\jT,\jT')$ satisfies the weak splitting condition.
\end{definition}

Note that Condition (S) is obviously open in the following sense.

\begin{lemme}
Assume that $\chain(H_n)$  satisfies Condition~(S). Given a small enough function $f$ in the 
$C^2$ topology, with support contained in the complement of the union of the cylinders of $\cyl(H_n)$, 
then $\chain(H_n+f):=\chain(H_n)$ is a chain at energy $\pdemi$ for  $H_n+f$ and satisfies 
Condition (S).
\end{lemme}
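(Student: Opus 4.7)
The approach uses three standard facts. First, since the support of $f$ is a closed subset of the open complement of the closed set $K := \bigcup_{\jC \in \cyl(H_n)} \jC$, $f$ vanishes on an open neighborhood of $K$; hence $X^{H_n+f} \equiv X^{H_n}$ near each cylinder. This immediately yields that every (singular) cylinder of $\cyl(H_n)$ remains invariant and normally hyperbolic for $X^{H_n+f}$, with unchanged local strong stable and unstable manifolds, torus foliation, and dynamically minimal subfamily; consequently the assignments $\cyl(H_n+f) := \cyl(H_n)$, $\tori(H_n+f) := \tori(H_n)$ and $\chain(H_n+f) := \chain(H_n)$ are coherent. Next, because $f$ is $C^2$-small, on any fixed bounded time interval the flow of $X^{H_n+f}$ is $C^1$-close on compact sets to that of $X^{H_n}$. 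Finally, the global stable/unstable manifolds are obtained by flowing the unchanged local ones, so on any compact portion $W^{s,u}(\jC;H_n+f)$, $W^{s,u}(\jT;H_n+f)$ and $W^{uu}(a;H_n+f)$ are $C^1$-close to their $H_n$-counterparts.

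I would then show that the weak splitting condition is $C^2$-open for a fixed pair $(\jT,\jT')$. Pick $a_0 \in \jT$ and $b_0 \in W^{uu}(a_0) \cap W^s(\jC') \cap W^s(\jT')$ with $W^{uu}(a_0)$ transverse to $W^s(\jC')$ at $b_0$ in $H_n\inv(\pdemi)$. The $C^1$-openness of transverse intersections provides, for $f$ sufficiently small, a smooth perturbed intersection map $a \mapsto b_f(a) \in W^{uu}(a;H_n+f) \cap W^s(\jC';H_n+f)$ defined for $a$ near $a_0$. Composing with the strong-stable holonomy $W^s(\jC';H_n+f) \to \jC'$, which is well-defined and continuous in $f$ since $\jC'$ and its local stable structure are unchanged, we obtain a map $\Psi_f : \jT \to \jC'$ which is a $C^1$-perturbation of $\Psi_0$, with $\Psi_0(a_0) \in \jT'$. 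Since $\dim\jT = \dim\jT' = 2$ and $\jT'$ is codimension one in $\jC'$, the two-dimensional freedom in $a$ combined with the implicit function theorem applied at a point where $\Psi_0$ meets $\jT'$ transversely in the normal direction yields, for $f$ small, some $a_f \in \jT$ with $\Psi_f(a_f) \in \jT'$, i.e., the weak splitting for $H_n+f$.

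The global Condition (S) then follows by compactness. For Condition (T) on a regular cylinder, the parameter interval $\I$ is compact, so the pointwise openness transfers uniformly to $H_n+f$ with a (possibly smaller) $\rho'>0$; each of the three subcylinders of a singular cylinder is treated identically. For the splitting between consecutive cylinders $\jC_k$ and $\jC_{k+1}$ in the chain, the open subsets $O_k, O_{k+1}$ shrink very slightly to open subsets still consisting of tori and still exhibiting weak splitting under $H_n+f$. The heteroclinic intersections defining the chain property itself are particular transverse intersections that persist by the same openness argument.

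The main obstacle is the codimension-one condition $b \in W^s(\jT')$ inside the $4$-dimensional $W^s(\jC')$: the bare weak splitting condition is not $C^2$-open without some transversality input. The argument therefore hinges on the mild regularity --- that $\Psi_0$ meets $\jT'$ transversely at the intersection point --- built into the explicit constructions of cylinders, tori and heteroclinic connections of the preceding section; once this is granted, the implicit function theorem closes the argument, and everything else is routine perturbation theory.
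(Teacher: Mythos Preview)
The paper offers no proof of this lemma: it is introduced with the sentence ``Note that Condition~(S) is obviously open in the following sense'' and then simply stated. Your argument is therefore considerably more detailed than anything in the paper, and the skeleton --- support of $f$ disjoint from the cylinders so the local hyperbolic structure is untouched, $C^1$-closeness of global invariant manifolds under $C^2$-small perturbation of the flow, persistence of transverse intersections --- is exactly the routine perturbation theory the authors have in mind.

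You are also right to isolate the one nontrivial point. The weak splitting condition of Definition~\ref{def:weakhet} requires not only the transverse intersection $W^{uu}(a)\pitchfork W^s(\jC')$, which is open, but also the closed codimension-one constraint that the intersection point lie on $W^s(\jT')$. As you note, this is not automatically stable under perturbation without the extra transversality of the holonomy map $\Psi_0:\jT\to\jC'$ to the leaf $\jT'$. That transversality is indeed present in the explicit construction of Lemma~\ref{lem:weaktran1} (the computation around~(\ref{eq:ws})--(\ref{eq:difen}) shows that the $r_1$-component of the image varies nondegenerately with $\th_1$), so in the context where the lemma is actually used your appeal to it is legitimate. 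Your treatment is thus sharper than the paper's: the authors' ``obviously'' tacitly relies on this same regularity input, and you have made the dependence explicit.
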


Our purpose in this section is to prove the following result.

\begin{prop}\label{prop:condT} Fix $\ka\geq2$. Then for each $n\geq1$,  there exists a function 
$f_n\in C^{\ka-1}(\A^3)$, whose support is contained in the complement of the union of the cylinders of 
$\cyl(\H_n)$, and which satisfies $\norm{f_n}_{C^{\ka-1}(\A^3)}\leq \tfrac{1}{n}$, such that
$\chain(\H_n+f_n)$ satisfies Condition~(S).
\end{prop}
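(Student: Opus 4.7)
The plan is to build $f_n$ as a finite sum of small localized bump functions supported in the complement of $\bigcup\cyl(\H_n)$, each bump shaped so as to produce a non-degenerate Melnikov-type splitting along one selected homoclinic or heteroclinic orbit of $\H_n$, in the spirit of \cite{D88,MS02}. I will first identify the natural homoclinic and heteroclinic sheets of $\H_n$ coming from the classical system, then place one bump near each, and finally verify via a Poisson-bracket Melnikov computation that the resulting splittings are strong enough to yield Condition~(S) for $\chain(\H_n+f_n)=\chain(\H_n)$.

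For the first step, fix a regular cylinder $\jC\in\cyl(\H_n)$ associated to an annulus $\sA_j$ with embedding $F:\T^2\times\I\to\H_n\inv(\pdemi)$. The product structure $\H_n=\pdemi r_1^2+C_U$ realizes each torus $\jT(s)=F(\T^2\times\{s\})$ as the direct product of a circle in the $(\th_1,r_1)$-plane with a hyperbolic periodic orbit $\Ga_j(e)$ of $C_U$, so its stable and unstable manifolds coincide along a $3$-dimensional homoclinic sheet inside $\H_n\inv(\pdemi)$, obtained by lifting the transverse homoclinic orbits $\om^i_e$ of $\Ga_j(e)$ provided by Definition~\ref{def:annuli}. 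Doing this for each covering subinterval $I^*_j$ gives, cylinder by cylinder, a locally smoothly varying family of homoclinic sheets; the same lifting procedure attaches a heteroclinic sheet to each consecutive pair $\jC_j,\jC_{j+1}$ in $\chain(\H_n)$, and similarly produces homoclinic and heteroclinic sheets inside each singular cylinder $\jC_\bu^\pm$ using the transverse connections of the singular annulus $\sA_\bu$.

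On each such sheet I pick one representative orbit $\ga$ and one base point $p\in\ga$ in its interior, i.e.\ in a compact set bounded away from every cylinder in $\cyl(\H_n)$. Around each $p$ I place a $C^{\ka-1}$ bump $\psi_i$ of small support, shaped exactly as in \cite{MS02} so that the first variation of the classical energy $C_U$ along nearby trajectories of $\H_n$ has non-vanishing derivatives in the directions transverse to $\ga$ inside $\H_n\inv(\pdemi)$. Writing $N_n$ for the finite total number of such bumps needed to cover all homoclinic and heteroclinic sheets associated to $k_1,\dots,k_n$, I rescale each $\psi_i$ to have amplitude at most $1/(nN_n)$, take the supports pairwise disjoint, and set $f_n:=\sum_i\psi_i$; this yields $\norm{f_n}_{C^{\ka-1}(\A^3)}\leq 1/n$, and the support lies in the complement of $\bigcup\cyl(\H_n)$, so every cylinder is preserved by the flow of $H_n=\H_n+f_n$. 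The key object is then the Melnikov function along $\ga$,
\beq
M(\ph,s,s')\;=\;\int_{-\infty}^{+\infty}\{C_U,f_n\}\bigl(\Phi_t^{\H_n}(z_{\ph,s,s'})\bigr)\,dt,
\eeq
which, since $\{C_U,\H_n\}=0$, measures to first order the jump of $C_U$ across the unperturbed homoclinic manifold between the tori $\jT(s)$ and $\jT(s')$. Because $f_n$ is supported near the single point $p$, $M$ reduces essentially to a translate of $\{C_U,\psi_i\}$ along $\ga$, and its partial derivatives in $\ph\in\T^2$ and in $s-s'$ can be made nonzero at the base values by the shape of $\psi_i$. This non-vanishing derivative is precisely what produces the transverse intersection of $W^{uu}(a)$ with $W^s(\jC)$ at a point of $W^s(\jT(s'))$ required by the weak splitting condition, and by continuity the property extends to all $(s,s')$ with $\abs{s-s'}$ small enough, yielding Condition~(T) on $\jC$; the same argument on the heteroclinic sheets between consecutive cylinders gives the inter-cylinder splitting needed for Condition~(S).

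The main obstacle is twofold. First, $N_n\to\infty$ with $n$, which forces each bump amplitude to be small; however, the weak splitting condition depends only on the shape of each bump, not its amplitude, so the nonzero first-order Melnikov term remains the dominant contribution to the splitting compared to $O(1/n^2)$ nonlinear corrections, uniformly in $n$. The second and more delicate point is that the singular cylinders $\jC_\bu^\pm$ are only $C^1$ and contain the hyperbolic fixed point $O$, so the standard smooth Melnikov theory does not apply uniformly up to the singularity. This will be handled by working in the $C^1$ pendulum parametrization $\phi_\bu$ from Section~2, which allows one to carry out the same Melnikov argument on each of the three sub-annuli $\jC_\bu^{>},\jC_\bu^{<},\jC_\bu^{0}$; convergence of the Melnikov integrals as one approaches $O$ is ensured by the exponential decay along the pendulum separatrices provided by the hyperbolic spectrum at $O$, and the required weak splitting between close tori lying in distinct sub-cylinders follows from exactly the same non-degenerate bump construction, giving Condition~(T) also for the singular cylinders.
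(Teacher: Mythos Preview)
Your strategy is correct and close in spirit to the paper's: localized bump perturbations supported away from the cylinders, producing a controlled splitting of the coincident invariant manifolds. The implementations differ in two ways worth noting. First, the paper avoids Melnikov estimates altogether by building explicit symplectic flow-box coordinates $(\tau,\e,s,u,\th_1,r_1)$ near each homoclinic sheet in which $\H_n\circ\ha\chi_n=\e/n$; the bump is then chosen of the very special form $\mu\,\eta_\tau(\tau)\eta_\th(\th_1)$, so that the perturbed vector field in the box is \emph{exactly} integrable and yields the closed formula $\De r_1=\mu\norm{\eta_\tau}_1\,\eta_\th'(\th_1)$ rather than a first-order approximation---this dispenses with your ``Melnikov term dominates $O(1/n^2)$ corrections'' step. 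Second, the paper proceeds in two stages: a first perturbation $f_n^{(1)}$ establishes Condition~(T) on every cylinder, and then for Condition~(S) it invokes a Lagrangian--intersection argument (the perturbed circles $W^u(\jT_k(e))\cap\jA$ and $W^s(\jT_{k+1}(e))\cap\jA$ enclose zero symplectic area and hence must meet) to obtain heteroclinic points between consecutive cylinders for free, adding a second tiny bump only if that intersection happens to be tangential and using the openness of Condition~(T) to absorb it. Your single-pass Melnikov construction on the heteroclinic sheets would also work, but is less economical. One imprecision to clean up: your Melnikov function should not carry an $s'$ argument. The integral is taken along the \emph{unperturbed} homoclinic manifold of a single torus $\jT(s)$, so $M=M(\ph,s)$; the existence of a perturbed heteroclinic connection to $\jT(s')$ is then the condition that the range of $\ph\mapsto M(\ph,s)$ covers the energy gap $e(s')-e(s)$, and the transversality required in Definition~\ref{def:weakhet} comes from $\partial_\ph M\neq0$ at the relevant phase.
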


The rest of the section is devoted to the proof which requires two steps. We will first exhibit a 
perturbation which creates the heteroclinic connections for the pairs of tori contained in the same cylinder
and we will then use the previous openness property to add a second perturbation  adapted to the
heteroclinic conditions for extremal tori of the chain. 


\subsection{Flow boxes near homoclinic intersections of cylinders}\label{sec:flowbox}
In order to properly define the various pertubations, we first need to construct ``flow boxes'' centered on 
suitable parts of 
the homoclinic manifolds of the  cylinders of $\H_n$, and located ``far from these cylinders''.  Given $U\in\jU$,
we fix an annulus $\sA$ of the system $C_U$ defined over $I$. We denote by $\Ga(e)$ the periodic orbit 
$\sA\cap C_U\inv(e)$ and we fix an open subinterval $I^*\subset I$ 
over which $W^{s,u}(\Ga(e))$ intersect transversely along a homoclinic orbit $\Om(e)$ 
(see Definition \ref{def:annuli}). Therefore, there exists a  $3$--dimensional section $\Sig$ in $\A^2$,
transverse to $X^{C_U}$,  such that the union $\cup_{e\in I^*}\Om(e)$ intersects $\Sig$ along a $C^{\ka-1}$ curve
$\sig$.

\paraga  Since $\Sig$ is transverse to $X^{C_U}$, for $e\in I^*$ the intersection $\Sig\cap C_U\inv(e)$ is
symplectic. Reducing $\Sig$ if necessary, one easily proves the existence of a ball $B=[-\de,\de]^2\subset\R^2$ centered
at $0$ and a $C^{\ka-1}$  diffeomorphism  $\chi_0:I^*\times B\to \Sig$, such that 
\begin{itemize}
\item $C_U\circ\chi_0(e,s,u)=e$;
\item the connected component of $W^u(\sA)\cap \Sig$ containing $\sig$ admits the equation $s=0$; 
\item the connected component of $W^s(\sA)\cap \Sig$ containing $\sig$ admits the equation $u=0$;
\item for each $e\in I^*$, $\chi_0(e,\cdot)$ is symplectic for the usual structure on $B$
and the induced structure on $\Sig\cap C_U\inv(e)$.
\end{itemize}

\paraga For $\tau_0>0$ small enough, the Hamiltonian flow $\Phi^{C_U} : \,]-\tau_0,\tau_0[\,\times\, \Sig \to \A^2$
is a  diffeomorphism onto its image $\jO$. One easily checks that one can choose the previous coordinates $(u,s)$
in such a way that the map
$$\chi: \,]-\tau_0,\tau_0[\,\times\, I^*\times B \longrightarrow \jO$$
$$
\chi(\tau,e,s,u)=\Phi_{\tau}^{C_U}\big(\chi_0(e,s,u)\big).
$$
is a $C^{\ka-1}$ symplectic diffeomorphism.
By construction, the Hamiltonian
$C_U$ takes the simple expression 
$$
C_U\circ\chi(\tau,e,s,u)=e.
$$
This in turn yields a $C^{\ka-1}$ symplectic diffeomorphism 
$\ha\chi:\jD\longrightarrow \A\times \jO\subset\A^3$, where 
$\jD$ is the subset of all $(\tau,\e,s,u,\th_1,r_1)\in\,]-\tau_0,\tau_0[\,\times\, \R\times B \times\A$ such that 
$\e-\pdemi r_1^2\in I^*$ (note that now $\e$ stands for the {\em total} energy of the system), defined by
$$
\ha\chi(\tau,\e,s,u,\th_1,r_1)
=\Phi^{\H}_{\tau}\Big((\th_1,r_1),\chi\big(0,\e-\pdemi r_1^2,s,u\big)\Big)
=\Big((\th_1+\tau r_1,r_1),\chi\big(\tau,\e-\pdemi r_1^2,s,u\big)\Big),
$$
which clearly satisfies
$$
\H\circ \ha\chi(\tau,\e,s,u,\th_1,r_1)=\e.
$$
We will set
$$
\ha\Sig=\ha\chi\big(\{(0,\pdemi)\}\times\Sig\big)\subset\H\inv(\e)
$$
so that $\jD$ is a neighborhood of $\ha\Sig$ in $\A^3$.

\paraga The effect of the rescaling (\ref{eq:rescaling2}) is immediately computed in the previous straightening 
coordinates. We set
\beq\label{eq:rescalegeom}
\ha\chi_n=\sig_{\sqrt n}\inv\circ\ha\chi,\qquad \ha\Sig_n=\sig_{\sqrt n}\inv(\ha\Sig).
\eeq
Since for $t$ small enough
\beq\label{eq:rescaletime}
{\ha\chi}\inv\circ\Phi^{\H}_{t}\circ\ha\chi(\tau,\e,s,u,\th_1,r_1)=(\tau+t,\e,s,u,\th_1,r_1)
\eeq
one immediately gets
\beq\label{eq:rescstraight}
\ha\chi_n\inv\circ\Phi^{\H_n}_t\circ\ha\chi_n(\tau,\e,s,u,\th_1,r_1)=(\tau+\tfrac{1}{n}\,t,\e,s,u,\th_1,r_1),
\eeq
so that
$$
\H_n\circ\ha\chi_n(\tau,\e,s,u,\th_1,r_1)=\frac{\e}{n}.
$$


\subsection{Perturbation and Condition (T) for cylinders}
In this section we fix $n\geq 1$ and we work with the Hamiltonian $\H_n$.
We now construct a first perturbation $f_n^{(1)}$ which produces heteroclinic connections
between nearby elements of $\tori(\H_n+f_n^{(1)})$ contained in the same cylinder and yields 
Condition (T) for each cylinder.

\vskip1mm

To begin with, let us consider a regular cylinder $\jC\in\cyl(\H_n)$, associated with some annulus 
$\sA$ of $C_U$ defined over an interval $I$, and let $I^*$ be a subinterval of $I$ as in the previous section. 
Let $F:\T^2\times I\to \H_n\inv(\pdemi)$ be the associated embedding of $\jC$ (where we assume that
the natural parameter is the energy, the case of the annulus $\sA_0$ is easily treated using similar arguments).  
As usual, we set $\jT_{e}=F(\T^2\times\{e\})$.

\vskip2mm

We will use the previous flow box coordinates to construct the perturbation. 
With the assumptions and notation of the previous section, we define a function $f$ by
\beq\label{eq:fonctionf}
\til f\circ\ha\chi_n(\tau,\e,s,u,\th_1,r_1)=\mu\eta_\tau(\tau)\eta_\th(\th_1),
\eeq
where $\mu>0$ is a small enough constant and 
$
\eta_\tau
$
is a (nonzero) $C^\infty$ bump function whose support is located in $[-\tau_0,0]$ and which takes its values in $[0,1]$,
while $\eta_\th\in C^{\infty}(\T,[0,1])$ is a smooth function with support in $[-1/4,1/4]$ whose
derivative vanishes only at $0$ in  the interval $]-1/4,1/4[$. In particular the support of $f$ is contained in
the domain $\ha\chi_n(\jD)$ for coherence.

\vskip2mm

\begin{lemme}\label{lem:weaktran1} Fix a regular cylinder $\jC\in\cyl(\H_n)$, let $f$ be as in~(\ref{eq:fonctionf}) and
set $H_n=\H_n+f$, with $f=\til f\circ \ha\chi_n\inv$.
Then
there exist $\mu>0$, 
$\eta_\tau$ in $C^{\infty}(\R,[0,1])$, $\tau_\th$ in $C^\infty(\T,[0,1])$ and $\rho>0$ such that the pair of invariant tori $(\jT_{e},\jT_{e'})$ 
for $H_n$ satisfies the weak splitting condition for $e,e'\in I^*$ with $\abs{e-e'}<\rho$, and such that moreover  
$$
\norm{f}_{C^{\ka-1}(\A^3)}\leq \nu.
$$
\end{lemme}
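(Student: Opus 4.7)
The plan is to perform a Melnikov-style direct calculation in the flow-box coordinates $(\tau,\e,s,u,\th_1,r_1)$ of Section 4.2. In these coordinates both the unperturbed Hamiltonian $\H_n\circ\ha\chi_n = \e/n$ and the perturbation $f\circ\ha\chi_n = \mu\eta_\tau(\tau)\eta_\th(\th_1)$ take particularly simple forms. Writing out Hamilton's equations of $H_n=\H_n+f$ with respect to the symplectic structure pulled back through $\ha\chi_n$, one finds that $\dot s = \dot u = \dot\th_1 = 0$ throughout the flow box, while the only nontrivial contributions to $\dot\e$ and $\dot r_1$ are proportional respectively to $\eta_\tau'(\tau)\eta_\th(\th_1)$ and $\eta_\tau(\tau)\eta_\th'(\th_1)$. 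Integrating from the entry $\tau = -\tau_0$ to the exit $\tau = 0$ and using that $\eta_\tau$ vanishes at the endpoints of its support yields $\Delta\e = 0$ and
\[
\Delta r_1 \;=\; c_n\mu K\,\eta_\th'(\th_1),\qquad K := \int_{-\tau_0}^{0}\eta_\tau(\tau)\,d\tau > 0,
\]
for a nonzero constant $c_n$ depending only on $n$.

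Because $s$, $u$ and $\th_1$ are preserved through the box (and $f$ vanishes on $\jC$), the equations $W^u(\jC) = \{s=0\}$ and $W^s(\jC) = \{u=0\}$ persist in the chart on the fixed energy level $\{\e = \e_0\}$. Within $W^u(\jC)$ the submanifold $W^u(\jT_e)$ is shifted, however: a point asymptotic in backward time to the torus of classical energy $e$ must have the required classical energy at the \emph{entry} of the box, which translates into $r_1^{\mathrm{exit}} = r_1(e) + c_n\mu K\eta_\th'(\th_1)$. Invoking the product-like structure of $\H_n = \tfrac{1}{2}r_1^2 + C_U$ together with the invariance of $\th_1$ under the perturbed flow inside the box, I identify the strong unstable fiber $W^{uu}(a)$ through a point $a\in\jT_e$ with chart-angle $\th_1^a$: at the exit section it is the one-dimensional curve $\{s=0,\,\e=\e_0,\,\th_1=\th_1^a,\,r_1 = r_1(e)+c_n\mu K\eta_\th'(\th_1^a),\,u\ \mathrm{free}\}$.

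The intersection $W^{uu}(a)\cap W^s(\jC)$ is then the single point $p$ with $u=0$ and the remaining coordinates as above, and this point lies on $W^s(\jT_{e'})$ where $e'$ is uniquely determined by $r_1(e')=r_1(e)+c_n\mu K\eta_\th'(\th_1^a)$, giving $e' = e + O(\mu)$. Transversality in $H_n^{-1}(\pdemi)$ is immediate from $T_pW^{uu}(a) = \R\partial_u$ and $T_pW^s(\jC) = \mathrm{span}(\partial_\tau,\partial_s,\partial_{\th_1},\partial_{r_1})$, which together span the full five-dimensional tangent space of the energy level. Since $\eta_\th'$ has a simple zero at $\th_1=0$ and attains every value in a symmetric interval $[-M,M]$, the intermediate value theorem furnishes, for every pair $(e,e')\in I^*\times I^*$ with $|e-e'|<\rho$ (where $\rho$ is proportional to $|c_n|\mu KM$ uniformly in $e\in I^*$ by compactness), a suitable $\th_1^a$ and hence a point $a\in\jT_e$ realizing the weak splitting condition. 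The estimate $\|f\|_{C^{\ka-1}(\A^3)}\leq\nu$ then follows by taking $\mu$ small enough, as $f$ is $\mu$ times a fixed smooth compactly supported function expressed via the chart $\ha\chi_n$.

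The main delicate point I expect is the identification of the perturbed strong unstable fiber $W^{uu}(a)$ inside the flow-box coordinates. Since the box is located away from the cylinder $\jC$ along a homoclinic manifold, the fiber has to be tracked from near $\jC$ through the perturbed flow in the box to the exit section. The preservation of $\th_1$ within the box, together with the product structure of $\H_n$, is what reduces this identification to the corresponding one-degree-of-freedom statement for the classical system already provided by the flow-box construction.
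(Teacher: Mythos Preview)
Your approach is essentially the paper's own: write the perturbed vector field in the flow-box chart, integrate to get $\Delta r_1=\mu\|\eta_\tau\|_1\,\eta_\th'(\th_1)$ (up to a sign/constant), read off the traces of $W^u(\jT_e)$ and $W^s(\jT_{e'})$ on the section $\ha\Sig$ exactly as in the paper's formulas~(\ref{eq:ws})--(\ref{eq:wu}), and match them via the intermediate value theorem. The norm estimate by shrinking $\mu$ is also the same.

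There is, however, a genuine geometric confusion in your treatment of $W^{uu}(a)$. The strong unstable fiber of a point $a^\alpha\in\jT_e$ is a \emph{one-dimensional curve transverse to the section}~$\ha\Sig$; it meets $\ha\Sig$ in an isolated point, not in the curve ``$u$ free'' that you describe. Concretely, in the unperturbed product system the fiber is $\{(\th_1^{a^\alpha},r_1(e))\}\times W^{uu}_{C_U}(x^{a^\alpha})$, whose image under $\ha\chi_n^{-1}$ is a curve with $\tau$ varying as a function of $u$ and, because the box angle is $\th_1^{\rm box}=\th_1^{\rm orig}-\tau r_1$, with $\th_1^{\rm box}$ also varying along the fiber. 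Consequently your claim $T_pW^{uu}(a)=\R\partial_u$ is not correct: the tangent certainly has a $\partial_\tau$-component (coming from the classical fiber) and in box coordinates a $\partial_{\th_1}$-component as well.

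That said, the transversality you need---$W^{uu}(a^\alpha)\pitchfork W^s(\jC)$ in $H_n^{-1}(\tfrac12)$---only requires that the $u$-component of this tangent be nonzero, since $W^s(\jC)=\{u=0\}$. This does hold: at the entry of the box the fiber is the unperturbed one and inherits a nonzero $\partial_u$-component from the transverse homoclinic of the classical annulus; the linearized perturbed flow in the box satisfies $\delta\dot u=0$, so this component persists at the exit. Once you replace your incorrect identification of $W^{uu}(a)$ by this argument, your proof goes through and coincides with the paper's. The paper itself is terse at this step, invoking the ``global invariance of the unstable foliation'' and computing a nonzero component of the tangent after the perturbation; your version would benefit from spelling out the same invariance argument rather than asserting a specific (and incorrect) parametrization of the fiber.
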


\begin{proof} We have fixed the energy $\e=\pdemi$. 
We first choose
$\mu>0$ small enough so that the composition 
$
f=\til f\circ\ha\chi_n\inv
$
(which is of class $C^{\ka-1}$ since $\ha\chi_n$ is) satisfies $\norm{f}_{C^{\ka-1}}\leq\nu$, which is obviously possible.
The coordinates $(\tau,s,u,\th_1,r_1)$
form, via the symplectic diffeomorphism $\ha\chi_n$, a chart in the neighborhood of $\ha\Sig_n$ in $H_n\inv(\pdemi)$. 
In this chart,  the vector field generated by 
$H_n\circ \ha\chi_n$  reads:
\beq\label{eq:vectorbox}
\left\vert
\begin{array}{lllll}
\dot \tau= \frac{1}{n}\\
\dot s=0\\
\dot u=0\\
\dot \th_1=0\\
\dot r_1=\mu\eta_\tau(\tau)\eta_\th'(\th_1)\\
\end{array}
\right.
\eeq
Therefore, the
variation of the variable $r_1$ when passing through the support of the function $f$ is easily computed:
$$
\De r_1=\mu\norm{\eta_\tau}_1\eta_\th'(\th_1),
$$
where $\norm{\eta_\tau}_1\neq0$ is the $L^1$ norm.
The conclusion for the existence of transverse heteroclinic intersections now easily follows: in the chart
$(\tau,s,u,\th_1,r_1)$ the intersection $\ha\Sig\cap W^s(\jT(e'))$ reads
\beq\label{eq:ws}
\Big\{\Big(0,s,0,\th_1,\sqrt{\pdemi-e'}\Big)\mid \abs{s}\leq \de,\ \th_1\in\T\Big\}
\eeq
while the intersection $\ha\Sig\cap W^u(\jT(e))$ takes the form
\beq\label{eq:wu}
\Big\{\Big(0,0,u,\th_1,\sqrt{\pdemi-e}+\mu\norm{\eta_\tau}_1\eta_\th'(\th_1)\Big)\mid \abs{u}\leq \de,\ \th_1\in\T\Big\}.
\eeq
Consider a point $\th_1^0\in\T$ and let $a\in\ha\Sig\cap W^u(\jT(e))$ be the point of coordinates 
$$
\Big(0,0,0,\th_1^0,\sqrt{\pdemi-e}+\mu\norm{\eta_\tau}_1\eta_\th'(\th_1^0)\Big)
$$
in the chart $(\tau,s,u,\th_1,r_1)$,  and let $a^\al\in\jT(e)$ be its $\al$--limit point under the flow of $H_n$. Then
$$
a\in \Sig\cap W^u(\jT(e))\cap W^s(\jT(e')),
$$
provided that
\beq\label{eq:difen}
e'=\pdemi-\Big(\sqrt{\pdemi-e}+\mu\norm{\eta_\tau}_1\eta_\th'(\th_1^0)\Big)^2.
\eeq
Now, due to the form of the vector field (\ref{eq:vectorbox}), one easily checks (using the global invariance
of the unstable foliation of $W^u(\jC)$ under the flow $\Phi_t^{H_n}$) that if $\ga$ is the tangent vector
to $W^{uu}(a^\al)$ at the point $a$, then the $e$ component of $\ga$ in the flow box coordinates is nonzero
as soon as $\th_1$ is in the support of $\eta_\th$.
So the tori $\jT(e)$ and $\jT(e')$ admit a heteroclinic connection as soon as there exists $\th_1^0$
satisfiying (\ref{eq:difen}), which is obviously true when $\abs{e-e'}$ is small enough (depending on $\mu$
and on the variation of $\eta_\th'$), and the previous remark morover proves that the weak splitting condition
is satisfied.
\end{proof}

It remains to examine the case of the singular cylinders. The previous lemma still applies to the three
regular subannuli $\sA_\bu^>$, $\sA_\bu^<$ and $\sA_\bu^0$, so it only remains to consider the neighborhood
of the critical energy $\ha e$.

\begin{lemme}\label{lem:weaktran2} Let $\jC$ be one of the two singular cylinders $\jC_\bu^\pm$. Let $\nu>0$ be fixed.
Then there exists an open interval $I^*$ containing $\ha e$ and $f\in C^{\ka-1}(\A^3)$
$$
\norm{f}_{C^{\ka-1}(\A^3)}\leq \nu
$$
such that each pair $(\jT^<(e),\jT^0(e'))$ or $(\jT^>(e),\jT^0(e'))$
with $e\in I^*$ and $e'\in I^*$ satisfies the weak splitting condition for the system $H_n=\H_n+f$.
\end{lemme}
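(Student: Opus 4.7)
The approach is to reproduce the construction of Lemma \ref{lem:weaktran1}, but this time centering the flow box on the transverse homoclinic orbit of the singular annulus $\sA_\bu$ itself, which is provided by the last bullet of Definition \ref{def:singannuli}. Let $\Om\subset C_U\inv(\ha e)$ be such a homoclinic orbit along which $W^u(\sA_\bu)$ and $W^s(\sA_\bu)$ intersect transversely in $C_U\inv(\ha e)$, and let $p\in\Om$ be a point \emph{distinct from the hyperbolic fixed point $O$}. Since $X^{C_U}(p)\neq 0$, the energy $e=C_U$ is a regular coordinate near $p$, so one can pick a $3$-dimensional section $\Sig$ transverse to $X^{C_U}$ at $p$ and coordinates $(e,s,u)$ on $\Sig$ exactly as in Section~\ref{sec:flowbox}, with $W^u(\sA_\bu)\cap \Sig=\{s=0\}$ and $W^s(\sA_\bu)\cap \Sig=\{u=0\}$; this extends through the critical value $e=\ha e$ because $\ha e$ is a regular value of $C_U$ restricted to $\Sig$.

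\textbf{Transition through the critical energy.} The new feature is that along the curve $\{s=0\}\subset \Sig$, as the parameter $e$ varies across $\ha e$, one passes from the trace of an outer subannulus ($\sA_\bu^>$ or $\sA_\bu^<$, for $e>\ha e$) to the trace of the inner subannulus ($\sA_\bu^0$, for $e<\ha e$), the transition point $e=\ha e$ being precisely the trace of $W^u(O)\cap\Sig$. This follows from the fourth bullet of Definition \ref{def:singannuli} (density of the unions of subannular stable/unstable manifolds in $W^{s,u}(\sA_\bu)$) together with continuity of the trace in $e$. Thus, a single flow box captures traces of tori from exactly one outer subcylinder and the inner subcylinder simultaneously.

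\textbf{Lift, perturbation, matching.} Lift $\Sig$ to $\A^3$ via the construction of Section \ref{sec:flowbox}, obtaining coordinates $(\tau,\e,s,u,\th_1,r_1)$ with $r_1=\pm\sqrt{2(\e-e)}$; since $e$ stays close to $\ha e<\pdemi$ (recall $\ha e=0$ after the rescaling of Proposition \ref{prop:cyl}), $r_1$ is bounded away from zero and the lift is smooth. Take a perturbation
\[
\til f\circ\ha\chi_n(\tau,\e,s,u,\th_1,r_1)=\mu\,\eta_\tau(\tau)\,\eta_\th(\th_1)
\]
with the same $\eta_\tau,\eta_\th$ as before and $\mu$ small enough that $\norm{f}_{C^{\ka-1}(\A^3)}\leq \nu$. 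The computation of Lemma \ref{lem:weaktran1} applies verbatim: a point of $W^u(\jT^{*}(e))\cap\ha\Sig$ lands in $W^s(\jT^{*'}(e'))\cap\ha\Sig$ iff the jump relation (\ref{eq:difen}) is solvable in $\th_1^0$. Choose an open interval $I^*$ containing $\ha e$ so small that $|e-e'|<\mathrm{diam}(I^*)$ always admits such a solution. By the transition described above, taking $e\in I^*\cap\,]\ha e,\til e]$ and $e'\in I^*\cap [e^0,\ha e[$ forces $\jT^*(e)$ to lie in the outer subcylinder (either $\sA_\bu^>$ or $\sA_\bu^<$, according to which homoclinic we picked) and $\jT^{*'}(e')$ in $\sA_\bu^0$, yielding the weak splitting for one of the two required pairs; the nonzero $e$-component of the tangent to $W^{uu}$ at the intersection, as in Lemma \ref{lem:weaktran1}, guarantees transversality to $W^s(\jC_\bu^\pm)$.

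\textbf{The second pair.} To also cover the pair $(\jT^>(e),\jT^0(e'))$ (or $(\jT^<(e),\jT^0(e'))$, depending on which was handled first), repeat the construction on the \emph{other} opposite homoclinic orbit of $O$ in $\sA_\bu$ (Definition \ref{def:singannuli}, first and last bullets), with a second perturbation whose support is disjoint from the first. The sum of the two perturbations is again $C^{\ka-1}$-bounded by $\nu$ up to an adjustment of $\mu$. The main technical subtlety is that $\sA_\bu$ is only $C^1$ near $\ha e$, so the change of variables $\ha\chi_n$ has reduced regularity there; this is harmless because the jump calculation (\ref{eq:vectorbox})--(\ref{eq:difen}) is linear in the perturbation and only uses $C^1$ regularity, while the transversality/openness of the weak splitting condition is preserved. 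The genuinely delicate point, and the one requiring the singular-annulus structure, is establishing the outer-to-inner transition on the trace curve through $e=\ha e$; once this is granted, the quantitative estimates are identical to those of Lemma \ref{lem:weaktran1}.
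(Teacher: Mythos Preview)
Your proof is correct and follows essentially the same approach as the paper: both invoke the transverse homoclinic connections from the last bullet of Definition~\ref{def:singannuli}, construct two flow boxes (the paper's sections $\Sigma^>,\Sigma^<$, one for each homoclinic of $O$), reuse the perturbation and jump computation of Lemma~\ref{lem:weaktran1}, and dismiss the $C^1$ singular locus as harmless. Your account simply supplies more detail than the paper on the outer-to-inner transition of the unstable trace across $e=\ha e$ and on why the two boxes are needed to cover both pairs.
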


\begin{proof} 
In fact the same considerations as in the previous lemma apply, thanks to the existence of transverse
homoclinic connections for the homoclinic orbits to the fixed point $O$ of $C_U$ (see Definition~\ref{def:singannuli}).
The singular annulus therefore admits a $C^1$ transverse homoclinic submanifolds $S^>,S^<$ in the neighborhood of each 
of the previous homoclinic connections. These submanifolds are almost everywhere of class $C^{\ka-1}$.
This enables one to find an interval $I^*$ and associated $C^1$ sections $\Sig^>,\Sig^<$ (almost everywhere of class $C^{\ka-1}$)
as above. One readily sees that the ``singular $C^1$ locus'' causes no trouble and the same arguments 
as above yield the existence of $f$, with controlled $C^{\ka-1}$ norm, for which the system $H_n$ satisfies our claim.
\end{proof}

\begin{cor}\label{cor:weaktran2}
Given $n\geq1$, there exists $f_n^{(1)}\in C^{\ka-1}(\A^3)$, with $\norm{f_n^{(1)}}_{C^{\ka-1}(\A^3)}\leq \tfrac{1}{2n}$,
 such that each $\jC\in\cyl(\H_n+f_n^{(1)})$
satisfies Condition (T).
\end{cor}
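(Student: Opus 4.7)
My plan is to combine the single-cylinder perturbations produced by Lemmas~\ref{lem:weaktran1} and~\ref{lem:weaktran2} over the finite collection $\cyl(\H_n)$ by a disjoint-support argument. The first observation is that only finitely many primitive classes $k_\ell$ with $\ell\leq n$ are involved and each set $\cyl_{k_\ell}(\H_n)$ is finite, so $\cyl(\H_n)$ has finite cardinality $N=N(n)$. I would enumerate its elements as $\jC^{(1)},\ldots,\jC^{(N)}$, mixing regular cylinders and singular cylinders without distinction.

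The core step is an inductive construction of perturbations $f^{(i)}\in C^{\ka-1}(\A^3)$, $1\leq i\leq N$, with pairwise disjoint supports. At stage $i$, I apply Lemma~\ref{lem:weaktran1} or Lemma~\ref{lem:weaktran2} to $\jC^{(i)}$ with threshold $\nu=\tfrac{1}{2n}$, obtaining a candidate perturbation supported in a flow-box neighborhood $\ha\chi_n(\jD_i)$ as constructed in Section~\ref{sec:flowbox}. The key point to justify is that this neighborhood can be made arbitrarily small: one is free to shrink the transversal section $\Sig$, the time window $\,]-\tau_0,\tau_0[\,$ and the energy subinterval $I^*$ used to build the flow box, provided the amplitude $\mu$ in~(\ref{eq:fonctionf}) is then chosen small enough to retain the bound $\norm{f^{(i)}}_{C^{\ka-1}}\leq\tfrac{1}{2n}$. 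Because the closed set $K_i:=\bigcup_{j<i}\mathrm{supp}(f^{(j)})\cup\bigcup_{j\leq N}\jC^{(j)}$ does not fill any entire homoclinic submanifold of $\jC^{(i)}$ (homoclinic manifolds lie off the cylinders by construction), I can shrink the $i$-th flow box so that its closure is disjoint from $K_i$.

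Setting $f_n^{(1)}=\sum_{i=1}^N f^{(i)}$, the pairwise disjointness of supports yields $\norm{f_n^{(1)}}_{C^{\ka-1}(\A^3)}=\max_i\norm{f^{(i)}}_{C^{\ka-1}(\A^3)}\leq\tfrac{1}{2n}$, and disjointness from $\bigcup_j\jC^{(j)}$ ensures that every $\jC\in\cyl(\H_n)$ remains invariant under the flow of $H_n:=\H_n+f_n^{(1)}$. Condition~(T) then decouples cylinder by cylinder: in the flow box around $\jC^{(i)}$ the global perturbation $f_n^{(1)}$ coincides with $f^{(i)}$ (all other summands vanish there), so the conclusions of Lemma~\ref{lem:weaktran1} or~\ref{lem:weaktran2} transfer verbatim and supply the required $\rho_i>0$.

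The main obstacle I anticipate is not the combinatorics but the verification that the flow-box construction of Section~\ref{sec:flowbox} can indeed be localized to arbitrarily small neighborhoods of points on any prescribed homoclinic orbit while still producing a nontrivial splitting of separatrices. This reduces to checking that $\mu$ in~(\ref{eq:fonctionf}) can be rescaled with the size of the supporting flow box, which follows from the explicit form~(\ref{eq:vectorbox}) of the perturbed vector field in flow-box coordinates: the displacement $\Delta r_1=\mu\norm{\eta_\tau}_1\eta_\th'(\th_1)$ only needs to be nonzero on some open set of $\th_1$ in order to force the weak splitting condition, and this property persists under arbitrary rescaling of the domain.
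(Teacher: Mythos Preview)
Your proposal follows the same approach as the paper's proof: construct finitely many perturbations with pairwise disjoint supports via Lemmas~\ref{lem:weaktran1} and~\ref{lem:weaktran2} and take their sum, whose $C^{\ka-1}$ norm is then the maximum of the individual norms. The only refinement in the paper is that the induction is indexed over the subintervals $I^*$ of the covering in Definition~\ref{def:annuli} rather than over whole cylinders, since a single application of Lemma~\ref{lem:weaktran1} only yields the weak splitting for $e,e'$ in one such $I^*$; your inductive scheme absorbs this without any change.
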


\begin{proof}
We apply the previous lemma inductively, after a preliminary ordering of all subintervals 
$(I_*(k))_{1\leq k\leq k^*}$  attached with all regular cylinders in $\cyl(\H_n)$ and a choice of pairwise disjoint 
attached  sections $\Sig$  and homoclinic curves $\sig$ (which is obviously possible thanks to the
structure of the set of annuli of $C_U$). Using the possibility to choose the support of the 
function $f$ in Lemma \ref{lem:weaktran1} inside an arbitrary neighborhood of $\sig$, we can therefore 
obtain a finite family of perturbations $f^k$, $1\leq k\leq k^*$,  with pairwise disjoint supports, such that the sum 
$f^{(1)}_n=\sum_k f^k$ satisfies the two claims of our statement since its norm is just the supremum of the
individual norms.  
\end{proof}


\subsection{Perturbation and Condition (S) for chains}
So far we have constructed a perturbed Hamiltonian $\H_n+f_n^{(1)}$ such that each cylinder of the family 
$\cyl(\H_n+f_n^{(1)})$ satisfies Condition (T). It remains now to add a new (and smaller) perturbation term 
to ensure that  the pairs of tori located in consecutive cylinders of the associated chain satisfy the weak splitting condition.
We begin with a classical lemma on the existence of heteroclinic intersections for tori with the same 
homology.

\begin{lemme}\label{lem:heter}
Set $\chain(\H_n+f_n^{(1)})=(\jC_k)_{1\leq k\leq k^*}$. 
Then for  $1\leq k\leq k^*-1$, there are tori $\jT_k\subset \jC_k$ and 
$\jT_{k+1}\subset\jC_{k+1}$ (of the family $\tori(\H_n+f_n^{(1)})$) which admit a heteroclinic connection.
\end{lemme}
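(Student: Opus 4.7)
The plan is to derive the existence of these heteroclinic connections directly from the construction of the chain $\chain(\H_n)$ carried out in Section 3, and then transfer everything to $\H_n+f_n^{(1)}$ using the disjoint-support hypothesis and the smallness of $f_n^{(1)}$. First I would recall from the proof of Proposition~\ref{prop:cyl} that for every pair of consecutive regular cylinders $(\jC_k,\jC_{k+1})$ in $\chain(\H_n)$ coming from annuli $\sA_j$ and $\sA_{j+1}$ of $C_U$, the lifting procedure
$$
(\th_1,e,\om)\longmapsto \Big((\th_1,\pm\sqrt{1-2e}),\,\om\Big)
$$
applied to the classical transverse heteroclinic orbits $\om\in W^u(\Ga_j(e))\cap W^s(\Ga_{j+1}(e))$ for $e$ in the nontrivial overlap $[\Max I_{j+1},\Min I_j]$ produces an entire annulus of heteroclinic orbits inside $W^u(\jC_k)\cap W^s(\jC_{k+1})$. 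At the endpoints of the chain, where a regular cylinder $\jC_{\pm m}$ must be connected with a singular cylinder $\jC_\bu^\pm$, the analogous lifting applied to the transverse connection between $\sA_m$ and $\sA_\bu$ provided by item~(3) of Theorem~\ref{thm:genprop} yields the corresponding heteroclinic orbits.

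Next I would transfer these connections to $H_n=\H_n+f_n^{(1)}$. Because the support of $f_n^{(1)}$ is disjoint from the cylinders of $\cyl(\H_n)$, each cylinder and each invariant torus $\jT(e)$ of its foliation remains unchanged, so indeed $\tori(H_n)=\tori(\H_n)$. The transverse intersection of $W^u(\Ga_j(e))$ and $W^s(\Ga_{j+1}(e))$ inside $C_U\inv(e)$ lifts to a transverse intersection of $W^u(\jC_k)$ and $W^s(\jC_{k+1})$ inside the full energy level $H_n\inv(\pdemi)$, and by the classical persistence of transverse intersections under $C^1$-small perturbations, this heteroclinic annulus survives (possibly slightly deformed) for~$H_n$.

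Finally I would identify the limit tori. Along any lifted heteroclinic orbit the classical energy $e$ is conserved, hence its $\al$-limit set is contained in the single torus $\jT_k(e)\subset\jC_k$ and its $\om$-limit set in $\jT_{k+1}(e)\subset\jC_{k+1}$, both of which belong to $\tori(H_n)$. Using the density statement of Proposition~\ref{prop:cyl}, I would choose $e$ in $[\Max I_{j+1},\Min I_j]$ so that both $\jT_k(e)$ and $\jT_{k+1}(e)$ are dynamically minimal; then the two limit sets fill the whole tori and the heteroclinic connection between $\jT_k(e)$ and $\jT_{k+1}(e)$ is established.

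The main obstacle is the extremal pair involving a singular cylinder $\jC_\bu^\pm$, where the classical parameter could a priori approach the critical energy $\ha e$ and hit the singular set rather than a regular invariant torus. This is handled by noticing that the heteroclinic connection of Theorem~\ref{thm:genprop}(3) between $\sA_m$ and $\sA_\bu$ lands on the regular subannulus $\sA_\bu^>$ (realizing the same homology class as $\sA_m$); choosing $e$ slightly above $\ha e$ places both $\jT_k(e)$ and the endpoint torus $\jT_\bu^>(e)$ in the regular part of the foliation, where the density of dynamically minimal tori again applies, and the previous argument goes through unchanged.
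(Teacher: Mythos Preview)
Your argument contains a genuine gap at the step ``Along any lifted heteroclinic orbit the classical energy $e$ is conserved.'' This assertion is correct for the unperturbed product system $\H_n=\pdemi r_1^2+\tfrac{1}{n}C_U$, but you are applying it to orbits of $H_n=\H_n+f_n^{(1)}$. The support of $f_n^{(1)}$ is disjoint from the \emph{cylinders}, not from their invariant manifolds; the heteroclinic orbits between $\jC_k$ and $\jC_{k+1}$ live far from both cylinders and may well cross $\mathrm{supp}\,f_n^{(1)}$. When they do, the variable $r_1$ (equivalently the classical energy $e=\pdemi-\pdemi r_1^2$) changes along the orbit---indeed, that is exactly the mechanism $f_n^{(1)}$ was designed to produce in Lemma~\ref{lem:weaktran1}. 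So after perturbation a point of the surviving heteroclinic annulus may lie in $W^u(\jT_k(e))\cap W^s(\jT_{k+1}(e'))$ with $e\neq e'$, and your identification of the limit tori collapses.

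The paper's proof handles precisely this difficulty with a different idea. It shows that the traces $C_k=W^u(\jT_k(e))\cap\jA'$ and $C_{k+1}=W^s(\jT_{k+1}(e))\cap\jA'$ are essential embedded circles in the perturbed heteroclinic annulus $\jA'$, possibly no longer coincident, and then invokes the exact-symplectic structure of the $(\th_1,r_1)$ coordinates on $\jA'$ together with the Lagrangian character of $W^u(\jT_k(e))$ and $W^s(\jT_{k+1}(e))$: the two circles have the same homology in $\jA'$ (they do for $\H_n$, and the Lagrangian property transports this), hence the symplectic area between them vanishes and they are forced to intersect. This argument is robust under the perturbation; yours, as written, is not. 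You could attempt to rescue your route by going back to the construction of $f_n^{(1)}$ and arguing that the (finitely many) relevant heteroclinic orbits of $\H_n$ can be kept disjoint from the shrinking supports of the summands $f^k$, but that is an additional constraint not imposed in Corollary~\ref{cor:weaktran2} and would have to be justified separately. Finally, the detour through dynamically minimal tori is unnecessary here: the lemma only asks for \emph{some} pair of tori with a heteroclinic connection.
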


\begin{proof} Let us begin with the unperturbed situation generated by $\H_n$.
Fix two consecutive (regular or singular) cylinders $\jC_k$ and $\jC_{k+1}$, associated with 
annuli $\sA_k$ and $\sA_{k+1}$. Then there exists an energy $e$ for which the  periodic
orbits $\sA_k\cap C_U\inv(e)$ and $\sA_{k+1}\cap C_U\inv(e)$ admit a transverse heteroclinic orbit, and 
this situation persists in a neighborhood of $e$ (recall that energy intervals over which our annuli are defined
admit small overlapping domains). As a consequence, 
as above, there exists a transverse section $\ha\Sig\subset\H_n\inv(\pdemi)$, endowed with 
symplectic coordinates  $(s,u,\th_1,r_1)$, such that $W^u(\jC_k)\cap \ha\Sig$ and $W^s(\jC_{k+1})\cap \ha\Sig$ 
read $\{u=0\}$ and $\{s=0\}$. The subset $\{u=s=0\}$ is the (local) intersection with $\Sig$ of a 
manifold of heteroclinic orbits between $\jC_k$ and $\jC_{k+1}$. This manifold $\jA$ is symplectic and 
diffeomorphic to $\T\times I$, where $I$ is some (small) open interval. The invariant manifolds $W^u(\jT_k(e))$ 
and $W^s(\jT_{k+1}(e))$ intersect $\jA$ along an essential circle $\Big\{r_1=\sqrt{2(\pdemi-e)}\Big\}$. 

\vskip1mm 

Now for $n$ large enough the perturbed situation for $\H_n+f_n^{(1)}$ is only a slight distorsion of the previous one. One can 
still find a  section $\Sig$ with coordinates $(s,u,\th_1,r_1)$ in which $W^u(\jC_k)\cap \Sig$ and 
$W^s(\jC_{k+1})\cap \Sig$  have the same equations as above and so intersect along the slightly perturbed annulus 
$\jA'$ with equation $u=s=0$ in the new coordinates (all this being  deduced from the various transversality 
properties). Again, by transversality,  $W^u(\jT_k(e))\cap \jA$  and $W^s(\jT_{k+1}(e))\cap \jA$  are  
embedded essential circles but they do not coincide any longer (in general). 

\vskip1mm 

However, it is easy to see that they still intersect each other, using the fact that the coordinates $(\th_1,r_1)$ 
are exact symplectic on $\jA$ together with the Lagrangian character of the invariant manifolds 
$W^u(\jT_k(e))$ and $W^s(\jT_{k+1}(e))$ (see \cite{LMS03} for more details). Indeed, since the 
tori $\jT_k(e)$ and  $\jT_{k+1}(e)$ are left unchanged when the perturbation is added, the intersections 
$C_k=W^u(\jT_k(e))\cap \jA$  and  $C_{k+1}=W^s(\jT_{k+1}(e))\cap \jA$ have the same homology in 
$\jA'$, meaning that the symplectic area between them vanishes. This comes from the fact that this assertion 
is trivially true in the unperturbed situation along with  the Lagrangian character of  $W^u(\jT_k(e))$  and 
$W^s(\jT_{k+1}(e))$. This proves our claim.
\end{proof}

Our next lemma will enable us to complete the proof of Proposition \ref{prop:condT}

\begin{lemme} For $n\geq n_0$ large enough,  there exists a function $f_n\in C^{\ka-1}(\A^3)$ with support
contained in the complement of $\cup_{1\leq k\leq k^*}\jC_k$, with 
$\norm{f_n}_{C^{\ka-1}(\A^3)}\leq \tfrac{1}{n}$, such that the chain  $(\jC_k)_{1\leq k\leq k^*}$
for the system $H_n:=\H_n+f_n$ satisfies Condition (S).
\end{lemme}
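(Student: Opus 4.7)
\medskip
The plan is to take $f_n:=f_n^{(1)}+f_n^{(2)}$, where $f_n^{(2)}\in C^{\ka-1}(\A^3)$ is a second, strictly smaller perturbation with $\norm{f_n^{(2)}}_{C^{\ka-1}(\A^3)}\leq\tfrac{1}{2n}$ (after, if necessary, shrinking $f_n^{(1)}$ to norm $\leq\tfrac{1}{2n}$), supported away from the cylinders and from $\mathrm{supp}(f_n^{(1)})$. Then $\norm{f_n}_{C^{\ka-1}(\A^3)}\leq\tfrac{1}{n}$, and since $\cup_k\jC_k$ remains invariant, $\chain(H_n)=\chain(\H_n+f_n^{(1)})=(\jC_k)_{1\leq k\leq k^*}$. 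The openness lemma following Definition~\ref{def:cylsingT} guarantees that Condition (T) is preserved for each individual cylinder if $f_n^{(2)}$ is small enough in $C^2$; so what remains is to arrange, by construction of $f_n^{(2)}$, the heteroclinic weak-splitting condition between consecutive cylinders for a full open family of tori.

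First I would use Lemma~\ref{lem:heter}: for each $k\in\{1,\ldots,k^*-1\}$ there is an energy $e_k$ and tori $\jT_k\subset\jC_k$, $\jT_{k+1}\subset\jC_{k+1}$ admitting a heteroclinic intersection. By the transversality built into the definition of a chain of annuli (Definition~\ref{def:chains}) and its Cartesian product with the $\th_1$ circle, $W^u(\jC_k)$ and $W^s(\jC_{k+1})$ in fact cross transversely in $H_n\inv(\pdemi)$ along a $C^{\ka-1}$ heteroclinic annulus $\jA_k'$. The tori $\jT_k(e),\jT_{k+1}(e)$ lie over an open interval of values of $e$, and by the homology/exact-symplectic area argument indicated in the proof of Lemma~\ref{lem:heter} the corresponding curves $W^u(\jT_k(e))\cap\jA_k'$ and $W^s(\jT_{k+1}(e))\cap\jA_k'$ are isotopic essential circles that cross for each such $e$.

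Next, for each $k$ I would pick a short arc $\sig_k$ of the heteroclinic manifold, located away from the cylinders and from $\mathrm{supp}(f_n^{(1)})$ and from the other $\sig_j$'s (possible by the local character of the flow-box construction and the compactness of $\chain(H_n)$), and build flow-box coordinates $(\tau,\e,s,u,\th_1,r_1)$ as in Section~\ref{sec:flowbox}, in which $W^u(\jC_k)=\{u=0\}$ and $W^s(\jC_{k+1})=\{s=0\}$. I would then introduce a localized perturbation
\begin{equation*}
f^k\circ\ha\chi_n(\tau,\e,s,u,\th_1,r_1)=\mu_k\,\eta_\tau(\tau)\,\eta_\th(\th_1),
\end{equation*}
with $\eta_\tau,\eta_\th$ as in~\eqref{eq:fonctionf}. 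The computation of Lemma~\ref{lem:weaktran1} applies verbatim and produces, along the unstable leaf of each $\jT_k(e)$, an $r_1$-shift $\De r_1=\mu_k\norm{\eta_\tau}_1\eta_\th'(\th_1)$, so that $W^{uu}(a)$ for $a\in\jT_k(e)$ meets $W^s(\jC_{k+1})$ at a point of $W^s(\jT_{k+1}(e'))$ transversely in $H_n\inv(\pdemi)$, with the relation $e'=\pdemi-(\sqrt{\pdemi-e}+\mu_k\norm{\eta_\tau}_1\eta_\th'(\th_1))^2$. Varying $\th_1$ over $\mathrm{supp}\,\eta_\th$ sweeps $e'$ over an open interval around $e$; varying $e$ in turn sweeps the corresponding $e$ values, giving open subsets $O_k\subset\jC_k$ and $O_{k+1}\subset\jC_{k+1}$ of tori satisfying the weak splitting condition. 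Choose $\mu_k$ small enough that $\norm{f^k}_{C^{\ka-1}(\A^3)}\leq\tfrac{1}{2n(k^*-1)}$; setting $f_n^{(2)}=\sum_{k=1}^{k^*-1}f^k$ (disjoint supports, so the $C^{\ka-1}$ norm is the supremum of the individual ones) yields $\norm{f_n^{(2)}}_{C^{\ka-1}(\A^3)}\leq\tfrac{1}{2n}$.

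The main obstacle I expect is bookkeeping rather than analysis: one has to simultaneously (i) keep all flow-box supports pairwise disjoint and disjoint from the $\mathrm{supp}(f_n^{(1)})$ and from $\cup_k\jC_k$, (ii) ensure that adding $f_n^{(2)}$ does not destroy Condition (T) built earlier, and (iii) keep the dependence on $n$ under control so that the relevant thresholds (openness of Condition (T), size of flow boxes, size of $\mu_k$) are compatible with $\norm{f_n}_{C^{\ka-1}}\leq 1/n$. Item (i) is arranged by the freedom in choosing $\sig_k$ inside $\jA_k'$; item (ii) follows from the openness lemma after Definition~\ref{def:cylsingT}, applied to the $C^2$-small perturbation $f_n^{(2)}$; item (iii) is handled by passing to $n\geq n_0$ large enough, since all geometric quantities (overlap intervals of the annuli $\sA_j$, size of heteroclinic angles, width of the flow boxes) survive the rescaling~\eqref{eq:rescstraight}.
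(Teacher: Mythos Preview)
Your proposal is correct and follows essentially the same approach as the paper: build $f_n=f_n^{(1)}+\sum_k f^k$, where each $f^k$ is a bump $\mu_k\,\eta_\tau(\tau)\eta_\th(\th_1)$ in a flow box around a piece of the heteroclinic annulus $\jA_k'$, apply the computation of Lemma~\ref{lem:weaktran1} verbatim to get the weak splitting between $\jT_k(e)$ and $\jT_{k+1}(e')$, and use disjointness of supports plus the openness lemma to preserve Condition~(T). The paper adds one small economy you omit: it first observes that if the essential circles $C_k=W^u(\jT_k(e))\cap\jA_k'$ and $C_{k+1}=W^s(\jT_{k+1}(e))\cap\jA_k'$ already intersect \emph{transversely} in $\jA_k'$, then the weak splitting condition (and hence the open sets $O_k,O_{k+1}$) are there for free and no perturbation is needed at that step; only when the intersection is tangential does one introduce the bump $f^k$. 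Your version perturbs uniformly, which is harmless but slightly less economical. (Also, since the supports of the $f^k$ are disjoint, $\norm{\sum_k f^k}_{C^{\ka-1}}=\max_k\norm{f^k}_{C^{\ka-1}}$, so bounding each by $\tfrac{1}{2n}$ already suffices; the extra factor $(k^*-1)^{-1}$ is unnecessary.)
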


\begin{proof} The proof is similar and even simpler than that of Lemma \ref{lem:weaktran1}. With the notation
of Lemma \ref{lem:heter}, if the circles $C_k$ and $C_{k+1}$ intersect transversely in $\jA$, there is obviously
nothing to do. Now if they intersect tangentially, one constructs a flow-box as in Section~\ref{sec:flowbox}
and again uses a perturbation of the form
$$
\ell_n\circ\ha\chi_n(\tau,\e,s,u,\th_1,r_1)=\mu\eta_\tau(\tau)\eta_\th(\th_1).
$$
The support of $\ell_n$ can be chosen arbitrarilly small, and its norm is controlled by means of the constant
$\mu$. In particular, it can be chosen small enough to preserve the Condition (T) for all cylinders. One
can therefore proceed by induction as above, using now the natural ordering of the heteroclinically
connected pairs of tori inside consecutive cylinders of the chain.
This proves the existence of a finite family of functions $\ell_n^j$, with controlled supports and norms, such
that $f_n=f_n^{(1)}+\sum_j\ell_n^j$ fulfills our claims.
\end{proof}


\section{Diffusion orbits and proof of Theorem~\ref{thm:main}}
We first recall the $\lambda$--lemma of \cite{S13} in a version adapted to our present setting and state an
abstract shadowing  result for chains of cylinders. We then apply this result to prove the main theorem of 
this paper.


\subsection{Shadowing orbits along chains of minimal sets}

The $\lambda$-lemma in \cite{S13} requires the existence of the ``straightening neighborhood" (Proposition~B) 
for the cylinders.
In the case of general normally  hyperbolic manifolds such results need abstract additional assumptions, 
but here we will take advantage of the very simple geometric structure of the problem.

\paraga Let us begin with a straightening result in the neighborhood of the annuli. Let $U\in\jU$ be fixed.

\begin{lemme} Let $\sA$ be an annulus defined over $I$ for $C_U$. Then there exist a neighborhood
$\jO$ of $\sA$, an interval $\ha I$ containing $I$ and a symplectic diffeomorphism 
$\Psi: \T\times \ha I\times B\to \jO$, where $B=[-\al,\al]^2$ is a ball  in $\R^2$, such that
$\sA=\Psi(\T\times I\times \{0\})$ and
the composed Hamiltonian $\bC=C_U\circ \Psi$ takes the form
\beq\label{eq:hambC}
\bC(\ph,\rho,u,s)=\bC_0(\rho)+\la(\ph,\rho)\,us+\bC_3(\ph,\rho,u,s)
\eeq
with
\beq\label{eq:vanishbC}
\bC_3(\ph,\rho,0,0)=0,\qquad D\bC_3(\ph,\rho,0,0)=0,\qquad D^2\bC_3(\ph,\rho,0,0)=0.
\eeq
In particular, the local stable and unstable manifolds $W^{s,u}_{\ell}(\sA)$ together with the local stable and unstable manifolds
$W^{ss,uu}_{\ell}(x)$ for $x\in\sA$ are straightened in these coordinates and read:
$$
\Psi\inv\big(W^s_{\ell}(\sA)\big)=\{u=0\},\qquad \Psi\inv\big(W^u_{\ell}(\sA)\big)=\{s=0\},
$$
$$
\Psi\inv\big(W^{ss}_{\ell}(x)\big)=\{(\ph,\rho, s, 0)\mid s\in [-\al,\al]\},\quad \Psi\inv\big(W^{uu}_{\ell}(x)\big)=\{(\ph,\rho,0,u)\mid u\in[-\al,\al]\},
$$
where $(\ph,\rho)$ is defined by $\Psi(x)=(\ph,\rho,0,0)$.
\end{lemme}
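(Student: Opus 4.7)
The plan is to build $\Psi$ in two geometric stages and then read off the normal form of $C_U$ from the invariance structure alone.

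\textbf{Stage 1 (symplectic coordinates around $\sA$).} Start with the symplectic action-angle parametrization $\phi_s:\T\times\ha I\to\sA$ introduced after~(\ref{eq:embedd}), for which $C_U\circ\phi_s(\ph,\rho)=\bC_0(\rho)$. By the orientability assumption in Definition~\ref{def:annuli}, the stable and unstable line subbundles of the normal bundle of $\sA$ are trivialised by nonvanishing sections $\xi^u,\xi^s$, which can be rescaled so that $\Om(\xi^u,\xi^s)\equiv 1$; this is possible because they form a transverse Lagrangian splitting of the $2$-dimensional symplectic normal plane to $\sA$. Setting
$$
\Psi_0(\ph,\rho,u,s)=\exp_{\phi_s(\ph,\rho)}\bigl(u\,\xi^u(\ph,\rho)+s\,\xi^s(\ph,\rho)\bigr)
$$
with respect to an auxiliary metric, one checks that $\Psi_0^*\Om$ coincides with the standard form $d\rho\wedge d\ph+du\wedge ds$ on $\T\times\ha I\times\{0\}$. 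Weinstein's symplectic neighborhood theorem (equivalently, Moser's trick applied to the linear interpolation between $\Psi_0^*\Om$ and the standard form) then produces a diffeomorphism $\chi$ fixing $\sA$ pointwise with $\Psi_1:=\Psi_0\circ\chi$ symplectic; along $\sA$, $W^s_\ell(\sA)$ and $W^u_\ell(\sA)$ are tangent to $\{u=0\}$ and $\{s=0\}$ respectively.

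\textbf{Stage 2 (rectification of the invariant manifolds).} The stable manifold theorem applied to the normally hyperbolic extension $\jC_\circ$ furnished by Definition~\ref{def:invcyl} expresses $W^s_\ell(\sA)$, read in the $\Psi_1$-coordinates, as a $C^{\ka-1}$ graph $\{u=h^s(\ph,\rho,s)\}$ with $h^s$ and $\partial_s h^s$ vanishing on $\sA$, and symmetrically $W^u_\ell(\sA)=\{s=h^u(\ph,\rho,u)\}$. I would construct a symplectomorphism $\Phi$, equal to the identity to first order on $\sA$, rectifying both graphs simultaneously through a Moser-type isotopy: interpolate $(h^s,h^u)$ with $(0,0)$ and solve for the generating time-dependent Hamiltonian $K_t$, which must vanish to second order on $\sA$ and be constant along each of the two hypersurfaces. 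Setting $\Psi=\Psi_1\circ\Phi^{-1}$ and $\bC=C_U\circ\Psi$ then achieves $W^s_\ell(\sA)=\{u=0\}$ and $W^u_\ell(\sA)=\{s=0\}$, while the strong stable and unstable leaves become the claimed coordinate segments.

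\textbf{Stage 3 (normal form) and main obstacle.} Invariance of $\{u=0\}$ under $X^{\bC}$ is equivalent to $\partial_s\bC(\ph,\rho,0,s)\equiv 0$, whence $\bC(\ph,\rho,0,s)=\bC(\ph,\rho,0,0)=\bC_0(\rho)$, the second equality coming from the action-angle form of $C_U|_{\sA}$. Symmetrically $\bC(\ph,\rho,u,0)=\bC_0(\rho)$. Hadamard's lemma applied first in $u$ and then in $s$ yields $\bC-\bC_0(\rho)=us\,Q(\ph,\rho,u,s)$ for some $Q\in C^{\ka-1}$; putting $\la(\ph,\rho)=Q(\ph,\rho,0,0)$ and $\bC_3(\ph,\rho,u,s)=us\bigl(Q-\la\bigr)$ gives~(\ref{eq:hambC}), and the vanishing conditions~(\ref{eq:vanishbC}) follow by direct differentiation of $us(Q-\la)$ using $(Q-\la)|_{u=s=0}=0$. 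The genuine difficulty is Stage~2: straightening both $W^s_\ell$ and $W^u_\ell$ \emph{simultaneously} by a symplectic change of coordinates. Rectifying either one alone is routine, but arranging that $K_t$ carry the needed flatness along both hypersurfaces at once requires exploiting the precise compatibility between the hyperbolic splitting of the normal bundle and the symplectic structure (both $W^s_\ell$ and $W^u_\ell$ are coisotropic, their characteristic foliations are the strong stable and unstable leaves, and together with $T\sA$ they span the ambient tangent spaces).
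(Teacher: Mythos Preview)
Your approach is correct and matches the paper's own proof, which is only a two-line sketch invoking the Moser isotopy lemma to obtain the straightening first and then reading off the normal form from the Hamiltonian structure. Your three stages flesh out exactly this program, and you correctly flag the simultaneous symplectic rectification of $W^s_\ell$ and $W^u_\ell$ as the one non-routine step---a point the paper's sketch does not address either.
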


The proof is a simple application of the Moser isotopy lemma. One
proves indeed the straightening result first and deduces the normal form from the structure of the Hamiltonian
system in such a neighborhood. The previous lemma yields the following straightening 
result in the neighborhood of the cylinders of $\cyl(H_n)$. 

\begin{lemme} \label{lem:straight} 
Let $\jC$ be a cylinder of the family $\cyl(H_n)$ and let $\sA$ be the associated annulus, 
defined over $I$. Let $\jO$ and $\Psi$ be defined as in the previous lemma.  Then, up to shrinking $B$ if necessary,
the product diffeomorphism  
$$
\ha\Psi={\rm Id}_\A\times \Psi\ :\  \A \times \T\times \ha I\times B\longrightarrow \A\times \jO
$$
is symplectic and satisfies
$$
H_n\circ\ha\Psi(\th_1,r_1,\ph,\rho,s,u)=\pdemi r_1^2+\bC_0(\rho)+O_2(s,u).
$$
\end{lemme}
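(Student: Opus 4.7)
The plan is to build $\ha\Psi$ simply as the product of the identity on the $(\th_1,r_1)$--factor with the symplectic straightening $\Psi$ of the previous lemma, and then read off the normal form by using that, modulo the perturbation $f_n$ (which vanishes near $\jC$), the Hamiltonian $\H_n$ splits as the sum of $\pdemi r_1^2$ and a purely ``classical'' Hamiltonian in the $(\ov\th,\ov r)$ variables proportional to $C_U$. Since the splitting $\A^3\simeq \A\times\A^2$ is symplectic with respect to the product structure, the product diffeomorphism $\ha\Psi$ will automatically be symplectic, and all the $(\th_1,r_1)$ variables will survive untouched.

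Concretely, I would proceed as follows. First, apply the previous lemma to the annulus $\sA$ to obtain $\jO$, $\ha I$, $B$ and $\Psi:\T\times \ha I\times B\to \jO$ with $C_U\circ\Psi(\ph,\rho,u,s)=\bC_0(\rho)+\la(\ph,\rho)us+\bC_3(\ph,\rho,u,s)$, where the conditions \eqref{eq:vanishbC} imply in particular that $\la(\ph,\rho)us+\bC_3(\ph,\rho,u,s)=O_2(s,u)$ in a neighborhood of $\{s=u=0\}$. Second, form $\ha\Psi=\mathrm{Id}_\A\times\Psi$. Symplecticity is immediate: the canonical form on $\A\times\A^2$ is the direct sum of the forms on the two factors, $\mathrm{Id}_\A$ pulls back the $\A$-form trivially, and $\Psi$ pulls back the $\A^2$-form to the standard form on $\T\times \ha I\times B$. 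Third, shrink $B$ if necessary so that $\A\times\jO$ is contained in the complement of $\mathrm{supp}\,f_n$; this is possible because $\mathrm{supp}\,f_n$ is, by construction, disjoint from the union of the cylinders of $\cyl(\H_n)$, and $\jC$ is the image under the embedding $F$ of a circle bundle over $\sA$, so a sufficiently small neighborhood of $\sA$ in $\A^2$ lifts to a neighborhood of $\jC$ disjoint from $\mathrm{supp}\,f_n$. On this neighborhood we have $H_n\equiv \H_n$, and by the product structure
\[
H_n\circ\ha\Psi(\th_1,r_1,\ph,\rho,s,u)=\pdemi r_1^2+\bigl(C_U\circ\Psi\bigr)(\ph,\rho,s,u)
=\pdemi r_1^2+\bC_0(\rho)+O_2(s,u),
\]
which is the desired normal form.

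The main point requiring care is the identification of the ``classical factor'' of $\H_n$ with $C_U$, since $\H_n$ involves the potential $\tfrac{1}{n}U$ rather than $U$, and the cylinders of $\cyl(\H_n)$ are constructed via the action rescaling $\sig_{\sqrt n}$ from cylinders of $\H$ (cf.\ \eqref{eq:rescaling1}--\eqref{eq:rescaling2}). This is handled by absorbing the scaling into the associated annulus and into the function $\bC_0$: the annulus associated with $\jC$ is understood, via $\sig_{\sqrt n}$, as an annulus for the classical system governing the $(\ov\th,\ov r)$-dynamics of $\H_n$, and the previous lemma, applied in that frame, produces the appropriate $\bC_0(\rho)$ already incorporating the $1/n$ factor. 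Apart from this bookkeeping, the argument is a direct product construction; the only other routine check is that shrinking $B$ preserves the vanishing properties \eqref{eq:vanishbC}, which is trivial since those are pointwise conditions along $\{s=u=0\}$.
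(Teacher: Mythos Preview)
Your proof is correct and follows exactly the paper's approach: shrink $B$ so that $\A\times\jO$ avoids the support of $f_n$, whence $H_n=\H_n$ there, and then read off the normal form from the product structure and the previous lemma. The paper's proof is in fact just two sentences to this effect; your additional paragraph on the $1/n$ rescaling is a legitimate bookkeeping point that the paper glosses over, and your resolution (absorb the scaling into $\bC_0$ by applying the previous lemma to the classical system governing the $(\ov\th,\ov r)$-dynamics of $\H_n$) is the right one.
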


\begin{proof} This is an immediate consequence of the fact that if $B$ is small enough, the neighborhood 
$\A\times\jO$ and the support  of $f_n$ are disjoint, so that $(H_n)_{\A\times\jO}=(\H_n)_{\A\times\jO}$. 
The claim then follows from the  previous lemma.
\end{proof}

Note that $\jC$ is then the set of all $\ha\Psi(\th_1,r_1,\ph,\rho,0,0)$ such that 
$$
\pdemi r_1^2+\bC_0(\rho)=\pdemi.
$$

The $\la$-lemma proved in \cite{S13} was stated in the framework of symplectic diffeomorphisms and 
normally  hyperbolic invariant submanifolds in a symplectic manifold. We therefore need to adapt it to
the present context, since  the cylinders $\jC$ are not normally hyperbolic in $\A^3$, but rather in 
$H_n\inv(\pdemi)$. The simplest way to  overcome this (easy) problem is to apply the lemma to the full 
normally hyperbolic manifold  $\jN=\ha\Psi(\A\times\T\times\ha I\times\{0\})$ (with the notation of the 
previous lemma) and the symplectic  diffeomorphism $\Phi^{H_n}$ (the time-one map). This is made possible by the previous 
straightening result (see \cite{S13} for a proof, the lack of compactness obviously causes no trouble here, 
due to the preservation of energy and the fact that  $\jC$ is relatively compact). We set $\Phi=\Phi^{H_n}$.

\vskip3mm

\noindent {\bf The $\la$-lemma.} {\em Let $\jC\in\cyl(H_n)$ be a cylinder at energy $\pdemi$ for 
the Hamiltonian  system  $H_n$ and let $\jN$ be the normally hyperbolic manifold of $\A^3$ defined 
above. Let $\De$ be a $1$--dimensional submanifold of $\A^3$ which transversely intersects
$W^s(\jN)$ at some point~$a$. Then $\Phi^n(\De)$ converges to the unstable leaf  
$W^{uu}\big(\Phi^n(\ell(a))\big)$ in the $C^0$ compact open topology, where $\ell(a)$ is the unique 
element of $\jC$ such that the point $a$ belongs to the stable leaf $W^{ss}(\ell(a))$.}

\vskip3mm

Let us make clear the notion of convergence used here (see \cite{S13} for details). The simplest way
to define it is to use Lemma \ref{lem:straight}. In the neighborhood $\A\times\jO$ and relatively to the
previous coordinates, if $x\sim(\th_1,r_1,\ph,\rho,0,0)\in\jC$, the unstable leaf $W^{uu}(x)$
reads
$$
W^{uu}(x)=\{(\th_1,r_1,\ph,\rho,0,u)\mid u\in\,[-\al,\al]\}.
$$
The first result in \cite{S13} (Theorem~1) is that for $n$ large enough, the connected  component $\De^n$
of $\Phi^n(a)$ in $\Phi(\De^{n-1})\cap (\A\times \jO)$ is a {\em graph} over the unstable direction, that is, 
it admits the equation
$$
\De^n=\Big\{\big(\th_1^n(u),r_1^n(u),\ph^n(u),\rho^n(u),s^n(u),u\big)\mid u\in\,]-\ov u,\ov u[\Big\}.
$$
The convergence statement then just says that 
$$
\norm{(\th_1^n(u),r_1^n(u),\ph^n(u),\rho^n(u),s^n(u))-(\th_1^n(0),r_1^n(0),\ph^n(0),\rho^n(0),0)}\to 0
$$
uniformly in $u$ when $n$ tends to $+\infty$, where $(\th_1^n(0),r_1^n(0),\ph^n(0),\rho^n(0),0,0) \sim \Phi^n(x)$. Of course one then gets more global formulation by using the
defintion of $W^u(\jC)$ as the union of the images by $\Phi$ of the local unstable manifold. Note that
this is only a $C^0$-convergence while a stronger $C^1$-convergence result was proved in  \cite{S13}.
The same definitions apply to the following case.

\vskip3mm

\begin{cor} Let $\jC\in\cyl(H_n)$. 
Let $\De$ be a $1$--dimensional submanifold of $H_n\inv(\pdemi)$ which transversely intersects
$W^s(\jC)$ in $H_n\inv(\pdemi)$ at some point $a$. Then $\Phi^n(\De)$ converges to the unstable 
leaf $W^{uu}\big(\Phi^n(\ell(a))\big)$ in the $C^0$ compact open topology.
\end{cor}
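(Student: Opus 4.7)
The plan is to deduce this corollary from the $\lambda$--lemma stated just above by showing that the hypothesis of transverse intersection inside the energy level $H_n\inv(\pdemi)$ automatically upgrades to transverse intersection inside the ambient space $\A^3$ with $W^s(\jN)$, after which the previous lemma applies directly.

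First, I would record two easy geometric facts about the manifold $\jN$ from Lemma~\ref{lem:straight}. In the straightened coordinates $(\th_1,r_1,\ph,\rho,s,u)$ one has $\jN=\ha\Psi(\A\times\T\times\ha I\times\{0\})$ with Hamiltonian
\[
H_n\circ\ha\Psi(\th_1,r_1,\ph,\rho,s,u)=\pdemi r_1^2+\bC_0(\rho)+O_2(s,u),
\]
so that $\jC=\jN\cap H_n\inv(\pdemi)$, and moreover $H_n$ restricted to $\jN$ has nonvanishing differential on $\jC$. Consequently $\jN$ meets $H_n\inv(\pdemi)$ transversely along $\jC$. Since energy is preserved by the flow and points of $W^{ss}(x)$ have the same energy as $x$, one deduces that
\[
W^s(\jN)\cap H_n\inv(\pdemi)=W^s(\jC),
\]
and that $W^s(\jN)$ still meets $H_n\inv(\pdemi)$ transversely inside $\A^3$ at every point of $W^s(\jC)$ (dimension count: $5+5-4=6$).

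Next I would upgrade the transversality of $\De$ with $W^s(\jC)$. At the point $a$, the hypothesis reads $T_a\De+T_aW^s(\jC)=T_a H_n\inv(\pdemi)$. Combining with the transversality of $W^s(\jN)$ and $H_n\inv(\pdemi)$ in $\A^3$ and the inclusion $T_aW^s(\jC)\subset T_aW^s(\jN)$, one gets
\[
T_a\De+T_aW^s(\jN)\supset T_aH_n\inv(\pdemi)+T_aW^s(\jN)=T_a\A^3,
\]
so $\De$ intersects $W^s(\jN)$ transversely in $\A^3$ at $a$. Since $a$ belongs to $W^{ss}(\ell(a))$ with $\ell(a)\in\jC\subset\jN$, the $\lambda$--lemma applies and yields the $C^0$ convergence of $\Phi^n(\De)$ to the unstable leaf $W^{uu}(\Phi^n(\ell(a)))$ in the compact open topology.

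The only point that deserves a brief verification is that the unstable leaf $W^{uu}(\Phi^n(\ell(a)))$ obtained from the normal hyperbolicity of $\jN$ coincides with the one provided by the normal hyperbolicity of $\jC$ inside $H_n\inv(\pdemi)$. This is immediate from the intrinsic definition by backward exponential contraction together with energy preservation: the $1$--dimensional curve of points contracted exponentially to the orbit of $\Phi^n(\ell(a))$ automatically lies in $H_n\inv(\pdemi)$ and is unambiguous. I do not expect a real obstacle here; the corollary is essentially a bookkeeping statement verifying that the abstract $\lambda$--lemma from \cite{S13} survives the restriction from the symplectic manifold $\A^3$ to the regular energy level $H_n\inv(\pdemi)$.
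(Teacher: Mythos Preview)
Your proof is correct and follows essentially the same approach as the paper: the paper's proof consists of the single observation that transversality of $\De$ with $W^s(\jC)$ inside $H_n\inv(\pdemi)$ implies transversality with $W^s(\jN)$ in $\A^3$, followed by an application of the $\lambda$--lemma and invariance of energy. You have simply supplied the details (the transversality of $W^s(\jN)$ with the energy level, the dimension count, and the identification of the unstable leaves) that the paper leaves implicit.
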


\begin{proof} 
Observe that the fact that $\De$ intersects $W^s(\jC)$ transversely in $H_n\inv(\pdemi)$
implies that $\De$ transversely intersects $W^s(\jN)$. Then apply the $\la$-lemma and use the invariance
of energy.
\end{proof}

\paraga We can now state the shadowing result proved in \cite{S13}. The method is reminiscent of that of \cite{BT99}.

\begin{prop}\label{prop:shadow}\emph{\textbf {[Shadowing lemma].}} 
 Set $\chain(H_n)=(\jC^i)_{1\leq i\leq i^*(n)}$. For $1\leq i\leq i^*$, let 
$(\jT_j^i)_{1\leq j\leq j^*_i}$  be a family of dynamically minimal invariant tori contained in 
$\jC^i$, such that 
\begin{itemize}
\item for $1\leq j\leq j^*_i-1$, there exists $a_j^i\in \jT_j^i$ such that $W^{uu}(a_j^i)$  intersects 
$W^s(\jC^i)$ transversely in  $H_n\inv(\pdemi)$, at some point contained in $W^s(\jT_{j+1}^i)$,
\item for $1\leq i\leq i^*-1$,  there exists $a_{j^*_i}^i\in \jT_{j^*_i}^i$ such that $W^{uu}(a_{j^*_i}^i)$  
intersects  $W^s(\jC^{i+1})$ transversely in  $H_n\inv(\pdemi)$, at some point 
contained in $W^s(\jT_1^{i+1})$. 
\end{itemize}
Then, for each $\rho>0$, there 
exists an orbit $\Ga$ at energy $\pdemi$ of $H_n$ which intersects each $\rho$--neighborhood
$\jV_\rho(\jT_j^i)$, for $1\leq i\leq i^*$ and $1\leq j\leq j^*_i$.
\end{prop}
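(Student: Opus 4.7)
The plan is to prove Proposition~\ref{prop:shadow} by finite induction based on the $\la$-lemma above. First I relabel the prescribed tori along the chain as $\jT^1,\ldots,\jT^M$, let $\jC^{(l)}$ denote the cylinder containing $\jT^l$, and let $a^l\in\jT^l$ together with $h^l\in W^{uu}(a^l)\cap W^s(\jT^{l+1})$ be the heteroclinic data furnished by either bullet (both assert transversality of $W^{uu}(a^l)$ with $W^s(\jC^{(l+1)})$ inside $H_n\inv(\pdemi)$). I fix in advance, in reverse order $l=M,M-1,\ldots,1$, a decreasing schedule $\rho>\eps_1>\cdots>\eps_M>0$ of tolerances and return times $K_l$, chosen so that dynamical minimality of $\jT^{l+1}$ brings any orbit on $\jT^{l+1}$ within $\eps_{l+1}$ of $a^{l+1}$ in at most $K_{l+1}$ steps, and so that any orbit of length $\leq K_{l+1}$ starting at distance $\leq \eps_l$ from $\jT^{l+1}$ in $\jC^{(l+1)}$ remains within $\eps_{l+1}$ of a corresponding orbit on $\jT^{l+1}$. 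The inductive statement at stage $l$ is: there exist a $C^1$ arc $\De^l\subset H_n\inv(\pdemi)$, a sub-arc $\De^{l,\star}\subset\De^l$ with a marked point $b^l$ within $\eps_l$ of $h^l$ at which $\De^{l,\star}$ cuts $W^s(\jC^{(l+1)})$ transversely, and an integer $T^l$, such that every $z\in\De^{l,\star}$ satisfies $\{\Phi^k(z)\}_{0\leq k\leq T^l}\cap \jV_\rho(\jT^j)\neq\emptyset$ for $1\leq j\leq l$. The base $l=0$ is a short transverse arc at any prescribed $b^0$, with $T^0=0$ and empty visit condition.

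For the step $l\to l+1$, apply the $\la$-lemma to $\De^l$ at $b^l$: the iterates $\Phi^K(\De^l)$ converge in $C^1$ on compact sets to $W^{uu}(\Phi^K(\ell(b^l)))$, where $\ell(b^l)\in\jC^{(l+1)}$ is the stable footprint of $b^l$. By the calibration of $\eps_l$, over $\{0,\ldots,K_{l+1}\}$ the orbit of $\ell(b^l)$ shadows the orbit of its $\jT^{l+1}$-projection within $\eps_{l+1}$; by minimality, some $K\leq K_{l+1}$ brings that projected orbit within $\eps_{l+1}$ of $a^{l+1}$, hence the same holds for $\Phi^K(\ell(b^l))$. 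Continuous dependence of the strong unstable foliation on its base point then makes $W^{uu}(\Phi^K(\ell(b^l)))$ $C^1$-close to $W^{uu}(a^{l+1})$, and transversality of the latter with $W^s(\jC^{(l+2)})$ at $h^{l+1}$ produces a sub-arc $\De^{l+1,\star}\subset\Phi^K(\De^l)$ transversely meeting $W^s(\jC^{(l+2)})$ at a unique $b^{l+1}$ within $\eps_{l+1}$ of $h^{l+1}$. Set $\De^{l+1}=\Phi^K(\De^l)$ and $T^{l+1}=T^l+K$, and shrink $\De^{l+1,\star}$ so that $\Phi^{-K}(\De^{l+1,\star})$ is a small neighborhood of $b^l$ in $\De^{l,\star}$: earlier visits to $\jV_\rho(\jT^j)$ for $j\leq l$ propagate by continuity, and during the intermediate iterates in $[T^l,T^{l+1}]$ each $z\in\De^{l+1,\star}$ shadows the orbit of $\ell(b^l)$ on its torus long enough to enter $\jV_\rho(\jT^{l+1})$ (using $\eps_l\ll\rho$) before escaping along the unstable direction toward $h^{l+1}$. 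After $M$ steps, any point of $\De^{M,\star}$ yields the desired orbit $\Ga$.

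The step I expect to be most delicate is coordinating the quantifiers: the intersection point $b^{l+1}$ is forced only onto the codimension-one manifold $W^s(\jC^{(l+2)})$ in $H_n\inv(\pdemi)$, not onto the codimension-two $W^s(\jT^{l+2})$, so its stable footprint generically lies on some nearby torus $\jT''\subset\jC^{(l+2)}$ distinct from $\jT^{l+2}$ and not necessarily dynamically minimal; the minimality needed to drive the next application of the $\la$-lemma must therefore be inherited from $\jT^{l+2}$ through finite-time shadowing on $\jC^{(l+2)}$. Fixing the tolerances $(\eps_l)$ and return times $(K_l)$ in reverse order, as above, encodes exactly this inheritance and is what makes the induction self-consistent. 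Once this bookkeeping is in place, the construction reduces to routine applications of the $C^1$-version of the $\la$-lemma together with the continuity of the strong unstable foliation, both of which are guaranteed by the normally hyperbolic straightening furnished by Lemma~\ref{lem:straight}.
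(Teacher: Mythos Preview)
The paper does not give its own proof of this proposition; it cites \cite{S13} for the argument, noting only that the method is reminiscent of \cite{BT99}. Your overall scheme---iterated application of the $C^1$ $\la$-lemma along the chain, using minimality of each $\jT^l$ to steer the moving strong unstable leaf toward the next heteroclinic point---is precisely the standard mechanism behind such shadowing results, so the architecture is sound.

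There is, however, a genuine gap in the inductive step. You locate a time $K\le K_{l+1}$ at which $\Phi^K(\ell(b^l))$ lies near $a^{l+1}$, and then take $\De^{l+1,\star}\subset\Phi^K(\De^l)$ as a sub-arc transverse to $W^s(\jC^{(l+2)})$. But the $\la$-lemma only guarantees that $\Phi^n(\De^l)$ is $C^1$-close to $W^{uu}(\Phi^n(\ell(b^l)))$ once $n\ge N_l$, and $N_l$ depends on $\De^l$---in particular on its transversality angle with $W^s(\jC^{(l+1)})$, which is unknown when $K_{l+1}$ is fixed in your reverse-order calibration. Nothing in your setup forces $K\ge N_l$, so the asserted transversality of $\Phi^K(\De^l)$ with $W^s(\jC^{(l+2)})$ is unjustified. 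Your last paragraph correctly isolates the difficulty that $\ell(b^l)$ need not lie on $\jT^{l+1}$, but the finite-time shadowing device you propose controls only the orbit of $\ell(b^l)$, not the convergence of the arc $\Phi^K(\De^l)$. The repair is specific to the present construction and easy once seen: since $f_n$ has support disjoint from the cylinders, the flow restricted to $\jC^{(l+1)}$ is that of the integrable $\H_n$, so each leaf $\jT'$ is invariant and carries a linear (Kronecker) flow, hence is either periodic or minimal. Your shadowing argument already places $\Phi^K(\ell(b^l))$ in a fixed neighborhood $V$ of $a^{l+1}$; the Kronecker dichotomy on $\jT'$ then forces the orbit of $\ell(b^l)$ to revisit $V\cap\jT'$ at \emph{arbitrarily large} times. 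One may therefore choose the iteration time $n\ge N_l$ with $\Phi^n(\ell(b^l))\in V$ and apply the $\la$-lemma there; with this amendment the induction closes.
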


%
%


\subsection{Asymptotic density: proof of Theorem~\ref{thm:main}}
It only remains now to gather the results of the previous sections and apply the previous shadowing lemma
to the chain of cylinders $\chain(H_n)$ and a suitable family of minimal tori inside. Fix $\de>0$. 
Given $n\geq 1$, we set as above  $\chain(H_n)=(\jC^i)_{1\leq i\leq i^*(n)}$.

\vskip1mm$\bu$ There exists $n_0$ such that for $n\geq n_0$, the union of the lines 
$(\S_{k_\ell})_{1\leq \ell\leq n}$ is $\de/4$--dense in $\S$.

\vskip1mm$\bu$ There exists $n_1\geq n_0$ such that for $n\geq n_1$, the sphere $\S$
is $\de/4$--dense in $H_n\inv(\pdemi)$, and therefore the union of the lines 
$(\S_{k_\ell})_{1\leq \ell\leq n}$ is $\de/2$--dense in $H_n\inv(\pdemi)$.

\vskip1mm$\bu$ By construction, there exists $n_2\geq n_1$ such that for $n\geq n_2$, 
$$
\dd\Big(\bigcup_{1\leq i\leq i^*(n)}\Pi(\jC^i),\bigcup_{1\leq \ell \leq n} \S_{k_\ell}\Big)\leq \de/6,
$$
where $\dd$ is the Hausdorff distance in $\R^3$.
This shows that $\bigcup_{1\leq i\leq i^*(n)}\Pi(\jC^i)$ is $\de/6$--dense in 
$\bigcup_{1\leq \ell \leq n} \S_{k_\ell}$.

\vskip1mm $\bu$ By density of the minimal tori in the cylinders (see Proposition \ref{prop:cyl}), 
and since the  chains satisfy Condition (S), for each $n\geq n_2$, one can exhibit a family of 
minimal tori $(\jT_j^i)$  satisfying the  assumptions of the shadowing lemma and such that the union 
$\cup_{i,j}\Pi(\jT_j^i)$  is $\de/6$--dense in $\cup_{1\leq i\leq i^*(n)}\Pi(\jC^i)$.

\vskip1mm$\bu$ Proposition~\ref{prop:shadow}, applied with $\rho=\de/6$, shows the existence of an orbit 
of $H_n$ whose projection is $\de/6$--dense in $\cup_{i,j}\Pi(\jT_j^i)$ and therefore
$\de/2$--dense in $\bigcup_{1\leq \ell \leq n} \S_{k_\ell}$, so also $\de$--dense in $H_n\inv(\pdemi)$.
This concludes the proof of Theorem~\ref{thm:main}.

%

\end{document}